\documentclass[11pt]{article}
\usepackage[margin=.9in]{geometry}

\usepackage{graphicx}
\usepackage{subcaption}
\usepackage{caption}
\usepackage{amssymb}
\usepackage{amsmath}
\usepackage{verbatim,booktabs}
\usepackage{longtable}
\usepackage{float}
\usepackage{mathrsfs}
\usepackage{threeparttable}
\usepackage{multirow}
\usepackage[usenames,dvipsnames]{pstricks}
\usepackage{epsfig}
\usepackage{enumitem}
\usepackage{hyperref}
\usepackage{todonotes}
\usepackage{algpseudocode}

\usepackage[capitalize,noabbrev]{cleveref}

\usepackage[utf8]{inputenc}
\usepackage{fullpage}
\usepackage{amsfonts}
\usepackage{xcolor}
\usepackage{changepage}

\allowdisplaybreaks

\crefname{claim}{Claim}{Claims}
\floatname{algorithm}{Algorithm}

\usepackage{amsthm}

\newtheorem{proposition}{Proposition}

\newtheorem{claim}{Claim}

\def\R{{\mathbb R}}

\def\ie{{i.e.,} }

\def\G{{\mathcal G}}

\def\X{{\mathcal X}}

\def\01{\ensuremath{0\mathord{-}1}}

\def\max{{\rm max}}
\def\min{{\rm min}}

\newcommand{\Tr}{\text{Tr}}

\newcommand{\norm}[1]{\Big\lVert#1\Big\rVert}
\usepackage[ruled,vlined]{algorithm2e}

\title{A scalable linear programming-based framework for data clustering
\thanks{The authors were partially funded by AFOSR grant FA9550-23-1-0123.}}

\author{
Aida Khajavirad
\thanks{Department of Industrial and Systems Engineering, Lehigh University, Bethlehem, PA 18015, USA.
             E-mail: {\tt aida@lehigh.edu}.
             }
             \and
Huanwen Shen
\thanks{Mitch Daniels School of Business, Purdue University, West Lafayette, IN 47907, USA.
             E-mail: {\tt shen809@purdue.edu}.
             }
\and             
Yakun Wang
\thanks{Department of Industrial and Systems Engineering, Lehigh University, Bethlehem, PA 18015, USA.
             E-mail: {\tt yaw220@lehigh.edu}.
             }
}
\date{}

\begin{document}
\maketitle

\begin{abstract}
We extend the linear programming-based algorithm of De Rosa et al~\cite{derKhaWan24} for K-means clustering to two important clustering paradigms: fair K-means clustering and spectral clustering. For fair K-means clustering, we show that widely used notions of group fairness can be incorporated into the partition-matrix formulation of K-means clustering through a linear number of linear inequalities. For spectral clustering, we consider a linear programming relaxation of the minimum ratio-cut problem that fits naturally within the same framework. We complement these formulations with problem-specific initialization and rounding procedures and evaluate the resulting algorithms on a large collection of real-world data sets. Denoting by $n$ the number of data points, our computational results demonstrate that the proposed approach solves $90\%$ of benchmark instances with $n \leq 3000$ to within $1\%$ optimality in at most three hours. This in turn demonstrates the remarkable strength of the proposed LP relaxations in both applications. Moreover, for more than $56\%$ of the instances, the proposed algorithm finds better solutions than those produced by popular fair Lloyd-type and spectral clustering heuristics.
\end{abstract}

{\emph Key words:} fair K-means clustering, spectral clustering, linear programming relaxation, cutting-plane algorithm.

\section{Introduction}
\label{sec:intro}

Clustering is a fundamental task in unsupervised learning whose goal is to group data points into subsets, called clusters, so that points within the same cluster are more similar to each other
than points in different clusters. One of the most widely used clustering methods is K-means clustering, which aims to partition a data set into $K$ clusters so that the total variance within the clusters is minimized. More formally, let $\{x^i\}_{i=1}^n$ denote a set of $n$ data points in $\R^m$, and denote by $K$ the number of desired clusters. A partition of $[n] := \{1,\ldots, n\}$ is a family $\{\Gamma_k\}_{k =1}^K$ of non-empty subsets of $[n]$ such that
$\Gamma_a \cap \Gamma_b = \emptyset$ for all $a\neq b \in [K]$ and $\cup_{k \in [K]} {\Gamma_k} = [n]$. The K-means clustering problem can be formulated as a combinatorial optimization problem:
\begin{align}
\label{Kmeans1}
\min  \quad & \sum_{k=1}^K{\sum_{i \in \Gamma_k} { \norm{x^i - \frac{1}{|\Gamma_k|}\sum_{j \in \Gamma_k}{x^j}}_2^2}}\\
\text{s.t.} \quad & \{\Gamma_k\}_{k=1}^{K} \; {\rm is \; a \; partition \; of \; [n]}.  \nonumber
\end{align}
K-means clustering is NP-hard even when there are only two clusters~\cite{Aloise09} or when the data points are in $\R^2$~\cite{MahNimVar09}. The most popular methods for solving K-means clustering are heuristics such as Lloyd’s algorithm~\cite{Lloyd82}, approximation algorithms~\cite{Kan02,FriRezSal19}, and convex relaxations~\cite{PenXia05,PenWei07,Awaetal15,IduMixPetVil17,LiLiLi20,AntoAida20,derKhaWan24}. In this paper, we are interested in solving clustering problems using convex relaxations.
While Problem~\eqref{Kmeans1} is perhaps the most natural formulation for K-means clustering, in the following, we present an alternative formulation for this problem which makes it amenable to various convexification techniques.
Consider a partition $\{\Gamma_k\}_{k =1}^K$ of $[n]$; let ${\bf 1}_{\Gamma_k}$, $k \in [K]$, denote the indicator vector of
the $k$th cluster; \ie  the $i$th component of ${\bf 1}_{\Gamma_k}$ is defined as $({\bf 1}_{\Gamma_k})_i = 1$ if $i \in \Gamma_k$ and $({\bf 1}_{\Gamma_k})_i = 0$ otherwise. Define
the associated \emph{partition matrix} by:
\begin{equation}\label{pm}
X = \sum_{k=1}^K{\frac{1}{|\Gamma_k|}{\bf 1}_{\Gamma_k}{\bf 1}^T_{\Gamma_k}}.
\end{equation}
Define $d_{ij} := ||x^i - x^j||_2^2$ for all $i,j \in [n]$.
Then Problem~\eqref{Kmeans1} can be equivalently written as (see~\cite{LiLiLi20} for the derivation):
\begin{align}
\label{Kmeans2}
\min \quad & \sum_{i,j \in [n]} {d_{ij} X_{ij}}\\
\text{s.t.} \quad & X \; {\rm is \; a \; partition\; matrix \; defined \; by~\eqref{pm}}.  \nonumber
\end{align}
The two prominent types of convex relaxations for K-means clustering are semidefinite programming (SDP) relaxations~\cite{PenWei07} and linear programming (LP) relaxations~\cite{AntoAida20}. In this paper, we are interested in solving clustering problems using LP relaxations. To this end, we next present an LP relaxation of Problem~\eqref{Kmeans2} introduced in~\cite{AntoAida20}. 
Fix a parameter $t \in \{2,\ldots, K\}$;
then an LP relaxation for K-means clustering is given by:
\begin{align}
\label{lp:pK}
\tag{LPK$_t$}
\min  \quad  & \sum_{i,j \in [n]} {d_{ij} X_{ij}}\nonumber\\
\text{s.t.} \quad  & \Tr(X) = K,\quad \sum_{j=1}^n X_{ij} = 1, \quad \forall i \in [n],\nonumber\\
& \sum_{j \in S} {X_{ij}} \leq X_{ii} + \sum_{j, k \in S: j < k} {X_{jk}}, \quad \forall i \in [n], \; \forall S \subseteq [n] \setminus\{i\}: 2 \leq |S| \leq t,\label{eq3k}\\
& X_{ij} \geq 0, \quad X_{ij}=X_{ji}, \quad \forall 1 \leq i < j \leq n \nonumber.
\end{align}
Notice that system~\eqref{eq3k} contains $\Theta(n^{t+1})$ inequalities, which makes the LP prohibitively expensive to solve for large data sets even for $t=2$. To address the  scalability of this LP, the authors 
of~\cite{derKhaWan24} devise a cutting-plane algorithm, which relies on an efficient separation of inequalities~\eqref{eq3k}, lower bounding and upper bounding techniques, and a GPU implementation of PDLP, a first-order primal-dual LP solver~\cite{lu2023cupdlpC}. They then solved Problem~\eqref{lp:pK} for real-world instances with up to $4000$ data points in less than two and a half hours. 
Surprisingly, their numerical experiments with real-world data sets indicate that the LP relaxation is almost always tight; \ie its optimal solution is a partition matrix. 
Motivated by these promising computational results, in this paper, we extend this framework to solve other important clustering formulations; namely, \emph{fair} K-means clustering and \emph{spectral} clustering.

\subsection{Fairness in clustering}

Conventional clustering algorithms group data points solely according to a chosen similarity measure. Consequently, the resulting clusters can exhibit unintended bias with respect to protected attributes such as race or gender. To address this issue, the fair clustering literature has introduced several notions of fairness, each capturing a different formal interpretation of what it means for clusters to treat protected groups equitably. In this paper, we focus on a notion of \emph{group fairness} known as the \emph{disparate impact}~\cite{feldman15}: the idea that the representation of protected groups within each cluster should not differ significantly from their representation in the overall population.  As we detail next, various fair clustering formulations in the literature can be seen as different ways of controlling the deviation from proportional representation.

In~\cite{chier17}, the authors introduce the notion of \emph{balance} for fairness in clustering.  Consider the set of data points $\X:=\{x^i\}_{i=1}^n$ and suppose that each point $x^i$ is associated with a protected attribute $g \in \G$, where $\G$ denotes the set of all protected groups. For each $g \in \G$, denote by $\X_g \subseteq \X$ the set of points with protected attribute $g$.  The balance of the data set $\X$ is then defined as 
\begin{equation}\label{balance}
{\rm balance(\X)} =\frac{\min_{g \in \G} |\X_g|}{\max_{g \in \G}|\X_g|}.
\end{equation}
Given a clustering $\{\Gamma_k\}_{k=1}^K$, the balance of a cluster $\Gamma_k$ for some $k \in [K]$ is defined as:
$$
{\rm balance}(\Gamma_k) =\frac{\min_{g \in \G} |\Gamma_k \cap \X_g|}{\max_{g \in \G}|\Gamma_k \cap \X_g |}.
$$
The balance of a clustering $\{\Gamma_k\}_{k=1}^K$ is then given by:
$$
{\rm balance}(\{\Gamma_k\}_{k=1}^K) = \min_{k \in [K]}{\rm balance}(\Gamma_k).
$$
Clearly, the balance of any clustering is upper bounded by the balance of the data itself. To achieve fairness, one aims to obtain a clustering with higher balance.
Given $t \in (0,1]$, we say that a clustering $\{\Gamma_k\}_{k=1}^K$ is~\emph{$t$-balanced} if 
\begin{equation}\label{balcond}
{\rm balance}(\{\Gamma_k\}_{k=1}^K) \geq t.
\end{equation}
 In~\cite{bera19}, the authors introduce a similar notion of fair clustering by controlling the maximum over-representation and the minimum under-representation of any protected group in any cluster. Namely, given parameters $\alpha, \beta \in [0,1]^q$, where $q:=|\G|$, a clustering $\{\Gamma_k\}_{k=1}^K$ is fair if it satisfies the following inequalities:
\begin{equation}\label{faircond}
   \beta_g\leq\frac{|\Gamma_k \cap \X_g|}{|\Gamma_k|} \leq \alpha_g, \quad \forall g \in \G, \; \forall k \in [K],
\end{equation}
where, without loss of generality, we assume that for each $g \in \G$ we have
$$\beta_g \geq 1-\sum_{h \in \G \setminus \{g\}}{\alpha_h}, \qquad \alpha_g \leq 1-\sum_{h \in \G \setminus \{g\}}{\beta_h}.$$ 
Define 
$$
\beta_{\min} := \min_{g \in \G}\beta_g, \quad \alpha_{\max} := \max_{g \in \G} \alpha_g.
$$
We deduce that if a clustering satisfies inequalities~\eqref{faircond}, then it is $t$-balanced with $t=\frac{\beta_{\min}}{\alpha_{\max}}$. To see this, observe that by~\eqref{faircond} for any $k \in [K]$, we have: 
$$
{\rm balance}(\Gamma_k) =\frac{\min_{g \in \G} |\Gamma_k \cap \X_g|}{\max_{g \in \G}|\Gamma_k \cap \X_g |} \geq  \frac{|\Gamma_k|\cdot \beta_{\min}}{|\Gamma_k|\cdot \alpha_{\max}} = \frac{\beta_{\min}}{\alpha_{\max}}.
$$
In~\cite{gupta23}, the authors introduce another notion of group fairness, called $\tau$-ratio fairness, which ensures that each cluster contains at least a predefined fraction of points from each protected group. Namely, given parameters $\tau_g \in (0,\frac{1}{K}]$ for all $g \in \G$, a clustering $\{\Gamma_k\}_{k=1}^K$ is $\tau$-ratio fair if it satisfies the following inequalities:
\begin{equation}\label{taufair}
    \frac{|\Gamma_k \cap \X_g|}{|\X_g|} \geq \tau_g, \quad \forall g \in \G, \; \forall k \in [K].
\end{equation}
If a clustering $\{\Gamma_k\}_{k=1}^K$ is $\tau$-ratio fair, then it is also $t$-balanced with 
\begin{equation}\label{tval}
t=\frac{\min_{g\in \G} {\tau_g |\X_g|}}{\max_{g \in \G}{(1-(K-1)\tau_g)|\X_g|}}.
\end{equation}
To see this, observe that from~\eqref{taufair} it follows that $$\tau_g|\X_g| \leq |\Gamma_k \cap \X_g|\leq (1-(K-1)\tau_g)|\X_g|, \quad \forall k \in [K].
$$ 
In the special case where $\tau_g = \frac{1}{K}$ for all $g \in \G$, substituting in~\eqref{tval} we deduce that
$${\rm balance}(\{\Gamma_k\}_{k=1}^K) =  {\rm balance}(\X).$$
Finally, in~\cite{lawgun24}, the authors introduce the notion of minimum representation for fairness in clustering. Namely, instead of requiring bounded representations across all clusters, they require that each protected group attains a minimum level of representation in at least a specified number of clusters. 

In this paper, we consider the problem of fair K-means clustering, where fairness is measured using the notion of balance and is enforced through inequalities~\eqref{faircond} or  inequalities~\eqref{taufair}. In addition to their widespread use in the fair clustering literature, as we detail in the next section, these inequalities can be formulated as linear inequalities in terms of partition matrices and hence can be readily incorporated into Problem~\eqref{lp:pK}.

Existing methods for fair clustering are either approximation algorithms~\cite{chier17,bera19,gupta23} or are variants of the popular Lloyd algorithm that somehow incorporate fairness into the clustering heuristic~\cite{ghadiri21,lawgun24}.
Roughly speaking, the approximation algorithms first find cluster centers by solving the clustering problem without fairness constraints, and then find a fair assignment of the data points to these cluster centers. In contrast, in this paper, we propose an LP-based algorithm with performance guaranties to solve the fair clustering problem. The proposed algorithm relies on a strong LP relaxation for fair K-means clustering obtained by incorporating fairness constraints into the LP relaxation of~\cite{derKhaWan24} together with an efficient rounding technique that can be considered as a fair Lloyd-type algorithm first introduced in~\cite{lawgun24}. Our computational results on various real-world data sets with $n \leq 3000$ indicate that about $90\%$ of the instances reach a relative optimality gap of less than $1\%$ within three hours; in fact, more than $72\%$ of the instances reach a relative optimality gap of less than $1\%$ within $500$ seconds. Moreover, for more than $45\%$ of the instances the proposed algorithm finds better solutions than a fair Lloyd-type heuristic.

\subsection{Spectral clustering}

Spectral clustering is a graph-based clustering method that exploits the spectral properties of a similarity graph to capture the intrinsic geometry of the data. Unlike K-means clustering, spectral clustering can successfully identify nonconvex or highly anisotropic clusters. For a comprehensive survey of spectral clustering and its theoretical foundations, we refer the reader to~\cite{Lux07}. In~\cite{LinStr20}, the authors show that spectral clustering can be interpreted as a continuous relaxation of the NP-hard minimum ratio-cut problem~\cite{WagWag93}. We briefly review this connection below.

Let $G$ be a weighted undirected graph with node set $V=\{v_1, \cdots, v_n\}$. Denote by $W=(w_{ij})_{i,j\in [n]}$ the weighted adjacency matrix of $G$. Notice that $W$ is a symmetric matrix with $w_{ii} = 0$ for all $i \in [n]$. Define the weighted degree of node $v_i$, $i \in [n]$, as ${\rm deg}(v_i) = \sum_{j \in [n]}{w_{ij}}$, and define the degree matrix $D$ as the diagonal matrix  whose diagonal entries are the degrees ${\rm deg}(v_1), \cdots, {\rm deg}(v_n)$. The \emph{graph Laplacian} of $G$ is defined as:
\begin{equation}\label{Laplacian}
L:=D-W.
\end{equation}
The minimum ratio-cut problem can then be formulated as (see~\cite{LinStr20} for the derivation):
\begin{align}
\label{minratio}
\min \quad & \sum_{i,j \in [n]} {L_{ij} X_{ij}}\\
\text{s.t.} \quad & X \; {\rm is \; a \; partition\; matrix \; defined \; by~\eqref{pm}},  \nonumber
\end{align}
where $L_{ij}$ denotes the $(i,j)$th entry of the graph Laplacian $L$.
Consider a set of points $\X =\{x^i\}_{i=1}^n$ and denote by $\{\Gamma_k\}_{k=1}^K$ a partition of $\X$ into $K$ clusters.
Define the indicator matrix $U \in \R^{n \times K}$, where the $k$th column of $U$, denoted by $U_k$, is defined as:
\begin{equation}\label{indicator}
U_k= \frac{1}{\sqrt{|\Gamma_k|}} {\bf 1}_{\Gamma_k},
\end{equation}
where ${\bf 1}_{\Gamma_k}$ is the indicator vector of
the $k$th cluster as defined in~\eqref{pm}. It can be checked that
$X= UU^\top$ and therefore Problem~\eqref{minratio}
can be equivalently written as:
\begin{align}\label{minratio2}
\min \quad & \Tr(U^T L U) \\
\text{s.t.} \quad & U \; \text{is an indicator matrix defined by~\eqref{indicator}.} \nonumber 
\end{align}
Spectral clustering is a two-step relax-and-round algorithm for solving Problem~\eqref{minratio2}. In the first step, spectral clustering relaxes the combinatorial constraint on $U$ and solves the following tractable relaxation of Problem~\eqref{minratio2}:
\begin{align}
\min \quad & \Tr(U^T L U) \\
\text{s.t.} \quad & U^\top U = I_K. \nonumber 
\end{align}
It can be shown that the optimal solution $\tilde U$ of the above problem consists of the eigenvectors of $L$ corresponding to its $K$ smallest eigenvalues. Notice that the resulting $\tilde U$ is not an indicator matrix. To obtain a feasible solution of Problem~\eqref{minratio2}, in the second step, spectral clustering performs a rounding step, in which K-means clustering is then applied on the rows of $\tilde U$ using Lloyd's algorithm.
In this way, the $i$th row of $\tilde U$ is interpreted as an embedding of the data point $x^i$ in $\mathbb{R}^K$. 
This two-step procedure is computationally efficient and often produces high-quality clusterings in practice. Nevertheless, spectral clustering remains a heuristic method, and its output does not generally coincide with an optimal solution of Problem~\eqref{minratio2} (or equivalently, Problem~\eqref{minratio}). In~\cite{LinStr20}, the authors propose an SDP relaxation of Problem~\eqref{minratio} and derive sufficient conditions under which this relaxation is tight, \ie its solution is a partition matrix.

Comparing Problem~\eqref{Kmeans2} and Problem~\eqref{minratio2}, we observe that the feasible regions of the two optimization problems are identical.  Consequently, replacing $d_{ij}$ with $L_{ij}$ in the objective function of Problem~\eqref{lp:pK} yields an LP relaxation of Problem~\eqref{minratio}. In this paper, we investigate the numerical properties of this LP relaxation. Namely, we propose an LP-based algorithm with performance guaranties to solve the minimum ratio-cut and hence the spectral clustering problem. We investigate the effectiveness of the proposed algorithm by performing social network analysis using real-world data sets. Our experiments on problems with $n \leq 1500$ indicate that $94\%$ of the instances reach an optimality gap below $1.0\%$ in three hours. In addition, more than $80\%$ of the instances reach an optimality gap of less than $1.0\%$ in about an hour. Interestingly, for $77\%$ if the instances, the proposed algorithm finds a better solution that the popular spectral heuristic.

\paragraph{Organization}
The remainder of this paper is structured as follows. Section~\ref{sec:cutting_plane} reviews the cutting-plane algorithm of~\cite{derKhaWan24} to solve Problem~\eqref{lp:pK}. In Section~\ref{sec:fair_clustering}, we propose an algorithm to solve the fair K-means clustering and perform extensive numerical experiments with real-world data sets. In  Section~\ref{sec:spectral_clustering}, we propose an algorithm to solve spectral clustering and investigate its computational properties by performing community detection using real-world data sets. Further computational results for fair K-means clustering are reported in the Appendix.

\section{A scalable algorithm for K-means clustering}
\label{sec:cutting_plane}

In this section, we provide a brief overview of the cutting-plane algorithm proposed in~\cite{derKhaWan24} to solve Problem~\eqref{lp:pK}. The main obstacle to solving Problem~\eqref{lp:pK} efficiently is that the system~\eqref{eq3k} consists of $\Theta(n^{t+1})$ inequalities. Indeed, as detailed in~\cite{derKhaWan24}, even when $t=2$, for $n > 400$, state-of-the-art LP solvers are unable to solve Problem~\eqref{lp:pK} within four hours. However, the customized algorithm of~\cite{derKhaWan24}, which relies on efficient separation of the inequalities~\eqref{eq3k}, is able to solve Problem~\eqref{lp:pK} for instances with up to $n = 4000$ within two hours. The main components of this algorithm are as follows:
\begin{itemize}[leftmargin=.2in]
    \item [$(i)$] {\bf  Initialization:} {\tt k-means++}, \ie an enhanced implementation of Lloyd's algorithm,  is used to compute an initial feasible solution and hence an upper bound on the optimal value of Problem~\eqref{lp:pK}. Moreover, to construct the first LP, inequalities~\eqref{eq3k} with $t=2$ that are satisfied tightly at the {\tt k-means++}'s solution are selected.
    
    \item [$(ii)$] {\bf Safe lower bounds:} In the first few iterations of the cutting-plane algorithm, the LPs are not solved to optimality and the solver is terminated early. The solutions returned in such cases by the solver are often infeasible. The authors make use of the existing techniques using LP duality to generate valid lower bounds on the optimal value of these intermediate LPs~\cite{neumaier2004safe}. 
    
    \item [$(iii)$] {\bf Rounding:} If at any iteration of the algorithm, the optimal solution of the LP relaxation is not a partition matrix, a rounding scheme proposed in~\cite{PenWei07} is used to ``round'' this solution and obtain a partition matrix whose cost may serve as a good upper bound on the optimal clustering cost. 

    \item [$(iv)$] {\bf Separation:} Fix $t \in \{2,\ldots, K\}$ and let $\tilde X$ denote the solution to the current LP.  For each $i \in [n]$, the separation problem is 
 to find a nonempty subset $S\subseteq[n] \setminus \{i\}$ with $2 \leq |S| \leq t$ such that
$$
w_i(S) := \sum_{j \in S} \tilde X_{i j} -\sum_{j, k \in S: j<k} \tilde X_{j k}>\tilde X_{i i},
$$
or to prove that no such subset exists. Repeated calculations can be avoided by using the relation
$
w_i(S\cup\{k\}) = w_i(S) + \tilde X_{ik}-\sum_{v\in S}\tilde X_{vk}
$,
where $\tilde X_{ij} = \tilde X_{ji}$ for all $i < j$. Enumerating all possible choices for $S$ is too expensive for large-scale problems. Therefore, to construct such a set $S$, the authors use a greedy strategy proposed in~\cite{marzi2019computational} to separate clique inequalities. This separation algorithm is not exact, as it may fail to identify violated inequalities corresponding to some subsets $S$. 
The outline of the separation scheme is provided in Algorithm~\ref{alg:sep}.
\end{itemize}

\begin{algorithm}[htb]
\SetKwFunction{sep}{separate}
\caption{The heuristic algorithm for separating inequalities~\eqref{eq3k}}
\label{alg:sep}
\KwIn{LP solution $\tilde X$, violation tolerance $\epsilon_{\rm vio}$, and $t_{\max}$}
\KwOut{A number of inequalities of the form~\eqref{eq3k} violated at $\tilde X$.}
            \For{$i \in [n]$ \textbf{in parallel}}{
                \For{$j \in [n] \setminus \{i\}$}{          
                    Initialize $S = \{j\}$, $w_i(S) = \tilde X_{ij}$, and $c = j$  \\             
                    \While{$|S| < t_{\max}$}
                        {select $k \in \{c + 1, \dots, n\} \setminus (S \cup \{i\})$ that maximizes $\gamma_i(k) = \tilde X_{ik}-\sum_{l\in S,l<k} \tilde X_{lk}$.\\
                        
                         Update $\bar{S} = S\cup\{k\} $, and $c = k$ \\
                        Compute $w_i(\bar{S}) = w_i(S) +\gamma_i(k)$. \\                      
                        \If{$ w_i(\bar{S}) > \tilde X_{ii} + \epsilon_{\rm vio} $}{
                        Add $ \sum_{k \in \bar S} {X_{ik}} \leq X_{ii} + \sum_{l, k \in \bar S: l < k} {X_{lk}}$
                        to the set of violated inequalities.
                        }
                        Update $S = \bar{S}$
                    }
                }
            }
\end{algorithm}

\begin{algorithm}[htb]
\SetKwFunction{cut}{Iterative LP solver}
\KwIn{Data points $\{x^i\}_{i=1}^n$, number of clusters $K$, optimality tolerance $\epsilon_{\rm opt}$, initial number of inequalities $p_{\rm init}$, number of inequalities added at each round of cut generation $p_{\max}$, and the solver time limit $T$ for each intermediate LP.}
\KwOut{Partition matrix $X_{ub}$ and optimality gap $r_g$.}
\textbf{Initialize:} 
    Set the lower bound $f_{lb} = -\infty$ and the optimality gap $r_g = +\infty$.
    Run {\tt k-means++} to get a partition matrix $X_{ub}$.
    Set the upper bound $f_{ub}$ as the cost of $X_{ub}$.    
    Randomly select at most $p_{\rm init}$ of inequalities~\eqref{eq3k} with $t = 2$ that are active at $X_{ub}$ in the LP. Set $t_{\max} = 2$.

\While{there exists a violated inequality of form~\eqref{eq3k},}{
    Solve the LP to obtain a safe lower bound $\bar{f}_{lb}$ and an optimal solution $X_{lb}$. \\
    \If{$\bar{f}_{lb}>f_{lb}$,}
       {Update $f_{lb} = \bar{f}_{lb}$}
    Round $X_{lb}$ and get a partition matrix $\bar{X}_{ub}$ with cost $\bar{f}_{ub}$.\\
    \If{$\bar{f}_{ub} < f_{ub}$,}
        {Update $X_{ub} = \bar{X}_{ub}$ and
        $f_{ub} = \bar{f}_{ub}$}
    Update $r_{g} = (f_{ub}-f_{lb})/f_{ub}$\\
    \If{$r_{g} \leq \epsilon_{\rm opt}$,}{Terminate}
    Remove from the LP, inequalities~\eqref{eq3k} that are not satisfied tightly at $X_{lb}$.\\
    Run Algorithm~\ref{alg:sep} to obtain 
    a number of inequalities of the form~\eqref{eq3k}
    with $t \leq t_{\max}$ that are violated at $X_{lb}$. 
    Add at most $p_{\max}$ of the most violated inequalities to the LP.\\
    
    \If{the number of violated inequalities returned by Algorithm~\sep is small,}
    {Update $t_{max} = \min\{K, t_{max} + 1\}$}
}
\caption{The cutting-plane algorithm for solving Problem~\eqref{lp:pK}}
\label{alg:Cutting}
\end{algorithm}

An overview of the cutting-plane algorithm is given in Algorithm~\ref{alg:Cutting}.  The algorithm iteratively adds violated inequalities of the form~\eqref{eq3k} to the current LP until no more violated inequalities can be found or the optimality gap is below a given tolerance. Moreover, the sparsity of the inequalities added to the LP is controlled by the parameter $t_{\max}$, which is initially set to $t_{\max} = 2$, and is increased by one only if the number of violated inequalities~\eqref{eq3k} with $t \leq t_{\max}$ found by Algorithm~\ref{alg:sep} is below a certain threshold.

In this paper, we extend Algorithm~\ref{alg:Cutting} to solve two important variants of data clustering: fair K-means clustering and spectral clustering. As we mentioned in Section~\ref{sec:intro}, and we will further detail in the next section, an LP relaxation for fair K-means clustering is obtained by adding a linear number of linear inequalities to Problem~\eqref{lp:pK}. Moreover, an LP relaxation for spectral clustering is obtained by replacing the objective function coefficients $d_{ij}$ by $L_{ij}$ in Problem~\eqref{lp:pK}. Therefore, components $(ii)$ and~$(iv)$ of Algorithm~\ref{alg:Cutting}, \ie the construction of safe lower bounds and the separation algorithm remain unchanged. However, as we describe in the next sections, for each clustering problem, we design customized algorithms to carry out components $(i)$ and~$(iii)$; \ie the initialization and rounding steps.

\section{Fair K-means clustering}
\label{sec:fair_clustering}

In this section, we consider the fair K-means clustering problem, where fairness is defined using the notion of disparate impact~\cite{feldman15}. As we described in Section~\ref{sec:intro}, this notion of group fairness is often quantified in terms of clustering balance~\cite{chier17}, and is enforced in various ways, such as proportional representation defined by inequalities~\eqref{faircond}~\cite{bera19} or $\tau$-ratio fairness defined by inequalities~\eqref{taufair}~\cite{gupta23}.

\subsection{Formulating fairness constraints as linear inequalities}

The next two propositions indicate that the inequalities~\eqref{faircond} and~\eqref{taufair} can be expressed as linear inequalities in terms of the partition matrices defined by~\eqref{pm}. In the following, given a set of data points $\X=\{x^i\}_{i=1}^n$, denote by $\{\Gamma_k\}_{k=1}^K$ a clustering of these points with the associated partition matrix $X$ defined by~\eqref{pm}. Suppose that each point has a protected attribute $g \in \G$, where $\G$ denotes the set of protected groups.  For each $g \in \G$, denote by $\X_g \subseteq \X$ the set of points with protected attribute $g$.
In addition, for each $g \in \G$, denote by $\eta^g \in \{0,1\}^n$ the indicator vector of group $g$; \ie $\eta^g_{i} = 1$ if $x^i \in \X_g$ and $\eta^g_{i} = 0$, otherwise. 

\begin{proposition}\label{fairineq1}
Let $\{\Gamma_k\}_{k=1}^K$ denote a clustering of $n$ points with the associated partition matrix $X$ defined by~\eqref{pm}. 
Then $\{\Gamma_k\}_{k=1}^K$ satisfies inequalities~\eqref{faircond}
if and only if $X$ satisfies the following inequalities:
\begin{equation}\label{pf1}
   \beta_g \leq  \sum_{i = 1}^{n} X_{ij}\eta^g_{i} \leq \alpha_g, \quad \forall g\in \G,\; \forall j\in [n]. 
\end{equation}
\end{proposition}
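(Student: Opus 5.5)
The plan is to compute the column sums $\sum_{i} X_{ij}\eta^g_i$ explicitly from the definition~\eqref{pm} of a partition matrix, and then show that the inequalities~\eqref{pf1}, indexed by $j$, are exactly the inequalities~\eqref{faircond}, indexed by $k$.

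Fix $j \in [n]$. Since $\{\Gamma_k\}_{k=1}^K$ is a partition, there is a unique index $k(j)$ with $j \in \Gamma_{k(j)}$. By~\eqref{pm}, $X_{ij} = \sum_{k} \frac{1}{|\Gamma_k|}({\bf 1}_{\Gamma_k})_i({\bf 1}_{\Gamma_k})_j$. Hence
\[
X_{ij} = \frac{1}{|\Gamma_{k(j)}|}\ \text{if } i \in \Gamma_{k(j)}, \qquad X_{ij}=0\ \text{otherwise}.
\]
Substituting this into the column sum gives
\[
\sum_{i=1}^n X_{ij}\eta^g_i = \frac{1}{|\Gamma_{k(j)}|}\sum_{i \in \Gamma_{k(j)}} \eta^g_i = \frac{|\Gamma_{k(j)}\cap \X_g|}{|\Gamma_{k(j)}|},
\]
where $\Gamma_k \cap \X_g$ is understood as the set of indices $i \in \Gamma_k$ with $x^i \in \X_g$. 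This identity holds for every $g \in \G$ and every $j \in [n]$.

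Both directions then follow from this identity. If~\eqref{faircond} holds for all $k$, then taking $k = k(j)$ yields~\eqref{pf1} for every $j$. Conversely, suppose~\eqref{pf1} holds. For each $k \in [K]$ the cluster $\Gamma_k$ is nonempty, since a partition consists of nonempty sets. So we can pick any $j \in \Gamma_k$; then $k(j)=k$, and~\eqref{pf1} at this $j$ gives~\eqref{faircond} for cluster $k$.

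The argument is routine. The only points needing care are the uniqueness of $k(j)$ and the nonemptiness of each $\Gamma_k$, which the converse direction uses.
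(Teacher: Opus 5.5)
Your proposal is correct and follows essentially the same route as the paper. Both derive the identity $\sum_{i} X_{ij}\eta^g_i = |\Gamma_{k(j)}\cap \X_g|/|\Gamma_{k(j)}|$ from the definition of the partition matrix and read off the equivalence, and your explicit appeal to the nonemptiness of each $\Gamma_k$ in the converse direction spells out a step the paper leaves implicit.
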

\begin{proof}
  Fix a group $g \in \G$. For each $j \in [n]$, denote by $k(j) \in [K]$
  the unique index such that $j \in \Gamma_{k(j)}$. From the definition of a partition matrix $X$ and the indicator vector $\eta^g$ it follows that
  \begin{equation}\label{usethis}
  \sum_{i = 1}^{n} X_{ij}\eta^g_{i} = \sum_{i \in \Gamma_{k(j)}}{\frac{1}{|\Gamma_{k(j)}|} \eta^g_{i}}= \frac{1}{|\Gamma_{k(j)}|} \sum_{i \in \Gamma_{k(j)}}{ \eta^g_{i}}=\frac{|\Gamma_{k(j)} \cap \X_g|}{|\Gamma_{k(j)}|},
  \end{equation}
  where the last equality follows since the expression $\sum_{i \in \Gamma_{k(j)}}{ \eta^g_{i}}$ counts the number of points in $\Gamma_{k(j)}$ that belong to group $g$. Therefore, for any $g \in \G$, we have 
  $$
  \beta_g \leq \frac{|\Gamma_{k} \cap \X_g|}{|\Gamma_{k}|} \leq \alpha_g, \quad \forall k \in [K],
  $$
  if and only if
$$
 \beta_g \leq  \sum_{i = 1}^{n} X_{ij}\eta^g_{i} \leq \alpha_g, \quad \forall j\in [n],
$$
and this completes the proof.
\end{proof}

\begin{proposition}\label{fairineq2}
Let $\{\Gamma_k\}_{k=1}^K$ denote a clustering of $n$ points with the associated partition matrix $X$ defined by~\eqref{pm}. 
Then $\{\Gamma_k\}_{k=1}^K$ satisfies inequalities~\eqref{taufair}
if and only if $X$ satisfies the following inequalities:
\begin{equation}\label{pf2}
   \sum_{i = 1}^{n} X_{ij}\eta^g_{i} \geq \tau_g |\X_g| X_{jj}, \quad \forall g\in \G,\; \forall j\in [n]. 
\end{equation}
\end{proposition}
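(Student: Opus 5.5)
The argument parallels the proof of Proposition~\ref{fairineq1}, reusing identity~\eqref{usethis}, which holds for any partition matrix. Fix $g \in \G$ and $j \in [n]$, and let $k(j)$ be the unique index with $j \in \Gamma_{k(j)}$. By~\eqref{usethis},
$$
\sum_{i=1}^n X_{ij}\eta^g_i = \frac{|\Gamma_{k(j)} \cap \X_g|}{|\Gamma_{k(j)}|}.
$$
From the definition~\eqref{pm}, the diagonal entries are $X_{jj} = 1/|\Gamma_{k(j)}|$, since $j$ lies in exactly one cluster. Substituting both expressions, inequality~\eqref{pf2} for the pair $(g,j)$ reads
$$
\frac{|\Gamma_{k(j)} \cap \X_g|}{|\Gamma_{k(j)}|} \geq \frac{\tau_g |\X_g|}{|\Gamma_{k(j)}|}.
$$

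Since $|\Gamma_{k(j)}| > 0$ (clusters in a partition are non-empty), this is equivalent to $|\Gamma_{k(j)} \cap \X_g| \geq \tau_g |\X_g|$. In the nontrivial case $|\X_g| > 0$, dividing by $|\X_g|$ shows this is exactly~\eqref{taufair} for the cluster $k(j)$. If $|\X_g| = 0$, then~\eqref{taufair} is not defined as written; in the intended case both sides of the derived inequality are zero, so it holds trivially. I would handle this with a brief remark or by assuming every group is nonempty.

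For the equivalence, note that $j \mapsto k(j)$ maps $[n]$ onto $[K]$, because every cluster is non-empty. Hence the family of inequalities~\eqref{pf2} over all $j \in [n]$ is exactly the family~\eqref{taufair} over all $k \in [K]$, possibly with repetitions. Both directions then follow at once.

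There is no real obstacle here. The only points needing care are the identification $X_{jj} = 1/|\Gamma_{k(j)}|$, which is what allows the right-hand side $\tau_g|\X_g|X_{jj}$ to absorb the denominator, and the surjectivity of $j \mapsto k(j)$, which is needed for the converse direction.
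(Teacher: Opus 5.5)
Your proof is correct and follows essentially the same route as the paper: you combine identity~\eqref{usethis} with $X_{jj} = 1/|\Gamma_{k(j)}|$ to reduce \eqref{pf2} at the pair $(g,j)$ to \eqref{taufair} for the cluster $k(j)$. Your explicit remarks on the surjectivity of $j \mapsto k(j)$ and on the degenerate case $|\X_g| = 0$ supply details the paper leaves implicit.
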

\begin{proof}
Fix a group $g \in \G$. For each $j \in [n]$, denote by $k(j) \in [K]$
  the unique index such that $j \in \Gamma_{k(j)}$. From identity~\eqref{usethis} and the definition of a partition matrix $X$, it follows that
  $$
  \big(\sum_{i = 1}^{n} X_{ij}\eta^g_{i}\big)\cdot \frac{1}{X_{jj}|\X_g|} = \frac{|\Gamma_{k(j)} \cap \X_g|}{|\X_g|}.
  $$
Therefore, for any $g \in \G$, we have 
  $$
  \frac{|\Gamma_{k} \cap \X_g|}{|\X_g|} \geq \tau_g, \quad \forall k \in [K],
  $$
  if and only if
$$
  \sum_{i = 1}^{n} X_{ij}\eta^g_{i} \geq \tau_g |\X_g| X_{jj}, \quad \forall j\in [n],
$$
and this completes the proof.  
\end{proof}

\subsection{The cutting-plane algorithm for fair K-means clustering}
\label{subsec:fairAlg}

In this section, we present a cutting-plane algorithm for solving the fair K-means clustering problem. The overall structure of this algorithm is similar to that of Algorithm~\ref{alg:Cutting} for solving the K-means clustering problem.
Specifically, the algorithm
consists of the four main components described in Section~\ref{sec:cutting_plane}. We next elaborate on each of these components.

By Propositions~\ref{fairineq1} and~\ref{fairineq2}, an LP relaxation for fair K-means clustering is obtained by adding either inequalities~\eqref{pf1} or inequalities~\eqref{pf2} 
to Problem~\eqref{lp:pK}. Notice that the number of such fairness inequalities grows linearly with the number of data points, implying that the bottleneck in solving the resulting LPs is the family of inequalities~\eqref{eq3k}. Hence, the safe lower-bounding and separation components of Algorithm~\ref{alg:Cutting} remain unchanged. It remains to specify the initialization and rounding components, both of which aim to produce a \emph{fair partition matrix}; \ie a partition matrix satisfying inequalities~\eqref{pf1} or inequalities~\eqref{pf2}.

To obtain fair partition matrices, we utilize a variant of fair Lloyd's algorithm, which was first proposed in~\cite{lawgun24}. Recall that Lloyd's algorithm alternates between two steps until convergence is reached: assigning each point to the closest centroid and updating each centroid as the mean of its assigned points. The algorithm of~\cite{lawgun24} replaces the assignment step with an integer programming problem that minimizes the total within-cluster squared distance subject to some fairness constraints. In our adaptation, the assignment step uses either constraints~\eqref{faircond} or constraints~\eqref{taufair}, to enforce our notion of fairness. For completeness, next we present this heuristic, which we will refer to as the {\it fair Lloyd's algorithm}.

Let $z_{ik}$ be a binary variable equal to $1$ if point $i$ is assigned to cluster $k$ and $0$ otherwise, and for each $k \in [K]$, let $c_{k}\in\R^m$ denote the centroid of cluster $k$. 
Given fixed centroids $c_1, \cdots, c_K$, the \emph{fair assignment problem} under constraints~\eqref{faircond}, is given by:
\begin{align}
\label{fairAssignI}
\tag{fairAssignI}
\min \quad & \sum_{i\in [n]}\sum_{k\in [K]}z_{ik}\|x^{i}-c_{k}\|^{2}_{2}\nonumber\\
\text{s.t.} \quad & \sum_{k\in [K]}z_{ik}=1, \quad \forall i\in [n],\nonumber\\
& \beta_g \sum_{i \in [n]} z_{ik}\leq \sum_{i \in [n]}z_{ik}\eta^g_{i} \leq \alpha_g  \sum_{i \in [n]} z_{ik}, \quad \forall g\in \G,\; \forall k\in [K],\label{fairAssignCons}\\
& z_{ik}\in \{0,1\}, \quad \forall i\in [n],\; k\in [K]\nonumber
\end{align}
Similarly, the fair assignment problem 
under constraints~\eqref{taufair} is given by:
\begin{align}
\label{fairAssignII}
\tag{fairAssignII}
\min \quad & \sum_{i\in [n]}\sum_{k\in [K]}z_{ik}\|x^{i}-c_{k}\|^{2}_{2}\nonumber\\
\text{s.t.} \quad & \sum_{k\in [K]}z_{ik}=1, \quad \forall i\in [n],\nonumber\\
& \sum_{i \in [n] } z_{ik}\eta^g_{i} \geq \lceil\tau_g |\X_g|\rceil, \quad \forall g\in \G,\; k\in [K].\label{fairAssignConsGupta}\\
& z_{ik}\in \{0,1\}, \quad \forall i\in [n],\; k\in [K]\nonumber
\end{align}
The fair Lloyd's algorithm iterates between solving Problems~\eqref{fairAssignI} or~\eqref{fairAssignII} fixing the current centroids and updating the centroids as the means of the assigned points, as summarized in Algorithm~\ref{alg:fairLloyd}.
\begin{algorithm}[htb]
    \caption{fair Lloyd's Algorithm}
    \label{alg:fairLloyd}
    \KwIn{Dataset $\{x^i\}_{i=1}^n$, number of clusters $K$, group information $\eta^g_{i}$, $i \in [n]$, $g \in \G$, and initial centroids $\{c_k\}_{k=1}^K$}
    \KwOut{Cluster assignments $z_{ik}$, $i \in [n]$, $k \in [K]$, and final centroids $\{c_k\}_{k=1}^K$}
    \Repeat{$f_1 = f_2$}{
        Given fixed centroids $\{c_k\}_{k=1}^K$, solve the fair assignment problem (\ie Problem~\eqref{fairAssignI} or Problem~\eqref{fairAssignII}) to obtain assignments $z^*$\;
Compute the cost of $z^*$ denoted by $f_1$\;
        \For{$k \in [K]$}{
            Update centroid $c_k = {\sum_{i=1}^n z_{ik} x^i} / {\sum_{i=1}^n z_{ik}}$\;
        }
        Compute the cost of $z^*$ with updated centroids denoted by $f_2$
    }
\end{algorithm}

We should remark that Algorithm~\ref{alg:fairLloyd} terminates after finitely many iterations. The argument is identical to the proof for Lloyd's algorithm, by noting that there are finitely many partitions of points into $K$ clusters and since the objective function decreases in every iteration, no partition can be visited twice.
The fair Lloyd's algorithm serves two purposes within our algorithm: it is used in the initialization step to generate an initial fair partition matrix, and it serves as the rounding heuristic to convert the LP solution to a fair partition matrix (see Algorithm~\ref{alg:rounding}).

\begin{algorithm}[htb]
    \caption{The rounding scheme to obtain a fair partition matrix}
    \label{alg:rounding}
    \KwIn{Data points: $\{x^i\}_{i=1}^n$, number of clusters: $K$ and, the LP solution $X_{lb}$}
    \KwOut{Fair partition matrix $X_{ub}$}
    Compute the eigenvectors $v_k$, $k \in [K]$, of $X_{lb}$ corresponding to its $K$ largest eigenvalues.
    Compute the initial cluster centers:
    $$
    c_{k} = v_{k}^{T}W,\quad \forall k \in [K],
    $$
    where $W$ denotes the matrix whose $i$th column is $x^i$\;
    Apply Algorithm~\ref{alg:fairLloyd} using $\{c_{k}\}_{k=1}^{K}$ as the initial centroids to obtain a fair partition matrix $X_{ub}$.
\end{algorithm}

Let us now discuss the computational cost of the fair Lloyd's algorithm. It is well-known that the traditional Lloyd's algorithm is extremely efficient and often produces high-quality solutions. However, in fair Lloyd's algorithm the trivial assignment step of Lloyd's algorithm is replaced by solving an integer programming problem, which can be computationally expensive in general. Next, we consider the cost of solving this integer program.
In the following, by \emph{the LP relaxation} of the fair assignment problem, we imply the LP obtained from Problem~\eqref{fairAssignI} or Problem~\eqref{fairAssignII}  by replacing 
$z_{ik} \in \{0,1\}$ by $z_{ik} \in [0,1]$ for all $i \in [n]$ and $k \in [K]$.
The next proposition indicates that the LP relaxation of Problem~\eqref{fairAssignII} is tight.

\begin{proposition}\label{prop:taufairLP}
The feasible region of the LP relaxation of Problem~\eqref{fairAssignII} is integral.
\end{proposition}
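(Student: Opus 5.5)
The plan is to show that the constraint matrix of the LP relaxation of Problem~\eqref{fairAssignII} is totally unimodular. Since the right-hand sides $1$ and $\lceil\tau_g|\X_g|\rceil$ are integers, and the bounds $0\le z_{ik}\le 1$ are integral, the Hoffman--Kruskal theorem then gives an integral polyhedron. It is convenient to write the feasible region as
\begin{align*}
&\sum_{k\in[K]} z_{ik}=1,\quad \forall i\in[n],\\
&\sum_{i\in[n]:\,\eta^g_i=1} z_{ik}\ge \lceil\tau_g|\X_g|\rceil,\quad \forall g\in\G,\; \forall k\in[K],\\
&0\le z_{ik}\le 1 .
\end{align*}
We use that the groups partition the points, so each $i$ lies in exactly one group, which we denote $g(i)$.

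The key observation is that every variable $z_{ik}$ appears in exactly two non-bound constraints. It appears in the assignment row for point $i$, and in the fairness row for the pair $(g(i),k)$; it has coefficient $1$ in both. We therefore bipartition the rows into $R_1=\{\text{assignment rows } i\in[n]\}$ and $R_2=\{\text{fairness rows }(g,k)\}$. Each column has exactly one $1$ in $R_1$ and exactly one $1$ in $R_2$. Hence the matrix is the vertex--edge incidence matrix of a bipartite graph: one side has node set $[n]$, the other has node set $\G\times[K]$, and $z_{ik}$ is the edge joining $i$ and $(g(i),k)$. Such matrices are totally unimodular.

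Two routine steps remain. First, the sense of the constraints ($=$ versus $\ge$) does not affect total unimodularity, since TU is preserved under multiplying rows by $-1$. Second, appending the identity rows for $z_{ik}\le 1$ and $-z_{ik}\le 0$ preserves TU. Alternatively, one can cite the standard fact that $\{z: A_1 z = b_1,\ A_2 z\ge b_2,\ 0\le z\le u\}$ is integral for integral data whenever $\begin{pmatrix}A_1\\A_2\end{pmatrix}$ is TU.

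The only real subtlety, which I expect to be the main point to state carefully, is that each point belongs to exactly one protected group. If points could belong to several groups, columns would contain more than two ones and the bipartite structure would fail. So the proof should explicitly invoke the partition structure of $\{\X_g\}_{g\in\G}$. An equivalent route, which could be mentioned as an alternative, is to view the LP as a transportation or $b$-matching problem with lower bounds on the group--cluster side; integrality of flows then gives the same conclusion.
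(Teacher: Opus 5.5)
Your proposal is correct and follows essentially the same route as the paper, which also identifies the constraint matrix as the node--edge incidence matrix of the bipartite graph with node sets $[n]$ and $\G\times[K]$, concludes total unimodularity, and deduces integrality. The only cosmetic difference is how the mixed constraint senses are handled: the paper adds the redundant upper bounds $\sum_{i} z_{ik}\eta^g_i\le|\X_g|$ to write the region as $c\le Az\le d$, $0\le z\le 1$, while you use row negation and append identity rows, which is equally valid.
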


\begin{proof}
Consider the LP relaxation of Problem~\eqref{fairAssignII}.
Since $0 \le z_{ik}\le 1$, for all $i \in [n]$, and $k \in [K]$,  the inequalities
$$
\sum_{i\in[n]}z_{ik}\eta_i^g\le |\mathcal X_g|,
\qquad \forall g \in\G, k \in [K],
$$
are redundant for this LP. Adding them to the LP relaxation, the feasible region of this problem can be written as
$$
Q=\{z:\ c\le Az\le d,\;0\leq z \leq 1\},
$$
where $A$ is the matrix whose rows are indexed by
$[n]\cup(\G\times[K])$ and whose columns are indexed by
$[n]\times[K]$. Let $g_i\in\G$ denote the protected group containing point $i$. In $A$,
the column corresponding to $z_{ik}$ has a
coefficient of $1$ in row $i$, a coefficient of $1$ in row
$(g_i,k)$, and $0$ elsewhere. Moreover,
$c_i=d_i=1$ for all $i\in[n]$, and
$c_{(g,k)}=\lceil\tau_g|\mathcal X_g|\rceil$, $d_{(g,k)}=|\mathcal X_g|$, for all $(g,k)\in\G\times[K]$. Hence $c$ and $d$ are integral vectors. 

Now consider the graph $H$ with node set $[n]\cup(\G\times[K])$ and the edge set $\{(i,(g_i,k)):i\in[n],\,k\in[K]\}$.
The graph $H$ is bipartite with bipartition
$[n]$ and $\G\times[K]$, and by construction, $A$ is its node-edge incidence
matrix. Therefore, by
Theorem~4.18 in~\cite{ConCorZam12b}, the matrix $A$ is totally unimodular.
Finally, since $A$ is totally unimodular and
$c,d$ are integral vectors,
Theorem~4.5 in~\cite{ConCorZam12b} implies that
$Q$ is an integral polyhedron.
\end{proof}

In~\cite{bera19}, the authors assert the NP-hardness of Problem~\eqref{fairAssignI} in a general metric space without providing a proof. In the following, for completeness, we provide a proof of NP-hardness for this problem in our special setting where the costs are squared Euclidean distances.

\begin{proposition}\label{prop:alfafairNPhard}
   Problem~\eqref{fairAssignI} is NP-hard even with $|\G|=2$.
\end{proposition}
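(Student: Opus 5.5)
We will reduce from \emph{Exact Cover by 3-Sets} (X3C), which is NP-complete: given a ground set $U$ with $|U|=3q$ and a family $S_1,\dots,S_m$ of $3$-element subsets of $U$, decide whether some subfamily partitions $U$. The idea is to use the fairness constraints~\eqref{fairAssignCons} with $\alpha_g=\beta_g$ to force every nonempty cluster to have a fixed composition. We then choose the squared-Euclidean costs so that a cluster can hold blue points cheaply only when those points are exactly the elements of ``its'' set $S_j$.

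\textbf{Construction.} Take $K=m$ clusters and work in $\R^m$ with standard unit vectors $e_1,\dots,e_m$. Fix the centroids $c_j=e_j$ for $j\in[m]$. There are two groups, $\G=\{{\rm blue},{\rm red}\}$.
\begin{itemize}
\item For each $u\in U$, create a blue point $x^u=\sum_{j:\,u\in S_j}e_j$, and let $\delta_u$ denote the number of sets containing $u$.
\item Create $q$ red points, all located at the origin.
\end{itemize}
Set $\alpha_{\rm red}=\beta_{\rm red}=1/4$ and $\alpha_{\rm blue}=\beta_{\rm blue}=3/4$. These values satisfy the paper's standing assumptions $\beta_g\ge 1-\sum_{h\ne g}\alpha_h$ and $\alpha_g\le 1-\sum_{h\ne g}\beta_h$. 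A direct computation gives
\[
\|x^u-c_j\|_2^2=\delta_u+1-2\,[u\in S_j],
\]
and every red point has cost $1$ to every centroid. Hence the objective of any feasible assignment equals
\[
C_0-2N, \qquad C_0:=\sum_{u\in U}(\delta_u+1)+q,
\]
where $N$ is the number of blue points assigned to a cluster $j$ with $u\in S_j$. Thus $N\le 3q$, and $N=3q$ exactly when every blue point is placed in a cluster whose set contains it.

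\textbf{Feasibility structure.} Constraints~\eqref{fairAssignCons} hold trivially for an empty cluster, since they read $0\le 0\le 0$. For a nonempty cluster they force exactly a $1{:}3$ red-to-blue ratio. With only $q$ red points and $3q$ blue points, every feasible assignment has at most $q$ nonempty clusters. Each nonempty cluster $k$ then has $r_k\ge 1$ red points and $3r_k$ blue points.

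\textbf{Equivalence.} We claim the optimal value of~\eqref{fairAssignI} is at most $C_0-6q$ if and only if the X3C instance is a yes-instance.
\begin{itemize}
\item Given an exact cover $S_{j_1},\dots,S_{j_q}$, put the three elements of $S_{j_\ell}$ together with one red point into cluster $j_\ell$, and leave all other clusters empty. This is feasible and has $N=3q$.
\item Conversely, suppose $N=3q$. Then each nonempty cluster $j$ contains only blue points from $S_j$, so it contains at most $3$ blue points. The ratio then forces exactly $r_j=1$ and exactly the three elements of $S_j$. So the nonempty clusters correspond to sets that partition $U$, which is an exact cover.
\end{itemize}
The construction has polynomial size with integer coordinates, so~\eqref{fairAssignI} with $|\G|=2$ is NP-hard.

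\textbf{Main obstacle.} The delicate step is realizing the intended ``cheap / expensive'' cost pattern with genuine squared Euclidean distances from fixed centroids, since a point cannot sit at distance $0$ from several centroids. The resolution above is the observation that $\|x-c_j\|^2=\|x\|^2+\|c_j\|^2-2\langle x,c_j\rangle$. With centroids of equal norm, each point's cost differs across clusters only through the inner product. The per-point constants $\delta_u+1$ are identical for every assignment, so they shift the objective by the fixed amount $C_0$ and do not affect the decision threshold.
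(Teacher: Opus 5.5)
Your reduction is correct, but it uses a different fairness gadget from the paper's.

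\textbf{The paper's construction.} The paper reduces from perfect 3DM. It takes the points to be the unit vectors $e_u$ and the centers to be $c_t=e_x+e_y+e_z$, so every assignment cost is exactly $2$ or $4$ and the threshold is simply $6q$. It uses the tripartite structure of 3DM to define the protected groups, $R=X\cup Y$ and $B=Z$, with $\alpha_R=\beta_R=2/3$. A cheap cluster must be a nonempty subset of its triple $\{x,y,z\}$, and among those subsets only the whole triple has the $2{:}1$ composition.

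\textbf{Your construction.} You start from X3C, where the ground set carries no natural group labeling. You therefore introduce $q$ auxiliary red points at the origin; they are equidistant from all centroids and hence cost-neutral. The $1{:}3$ ratio then works as a cardinality counter: a nonempty cluster needs at least one red point and hence at least three blue points, yet it can hold at most three cheap ones. You also use the transposed embedding, with centroids $e_j$ and points $\sum_{j:\,u\in S_j}e_j$, which makes the costs point-dependent. You correctly absorb the per-point constants $\delta_u+1$ into $C_0$, so the threshold argument goes through. One small point: in the converse you should state explicitly that cost at most $C_0-6q$ together with $N\le 3q$ forces $N=3q$; this is immediate from your accounting.

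\textbf{What each route buys.} The paper's construction is leaner: it has no dummy points, uniform costs, and every point is a genuine element of the source instance. Yours is more modular, because the token gadget does not rely on any structure of the ground set. It therefore works directly for X3C, of which 3DM is a special case, and the same $1{:}s$ ratio trick would force any fixed cluster size $s$.
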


\begin{proof}
We reduce from the NP-complete perfect three dimensional matching (3DM) problem, which can be stated as follows. Given three disjoint sets $X,Y,Z$ of equal cardinality $q$, and a set $T \subseteq X \times Y \times Z$, decide whether there exists $M \subseteq T$ such that every element of $X\cup Y\cup Z$ appears in exactly one triple of $M$. 
Given $X,Y,Z$ and $T$, we construct a fair assignment instance as follows. Define $U := X \cup Y \cup Z$.
Our ambient space is $\mathbb R^{|U|}$, with standard basis vectors denoted by $\{e_u: u\in U\}$. For every $u\in U$, create a point $x^u:=e_u$. For every triple $t =(x,y,z)\in T$, create a center $c_t:=e_x+e_y+e_z$. Thus $K=|T|$.
This construction is clearly polynomial in the size of the 3DM instance.
Notice that empty clusters are allowed in
Problem~\eqref{fairAssignI}, since if $\sum_i z_{ik}=0$, then the fairness constraints reduce to $0\le 0\le 0$.
Now, compute the squared distances; two cases arise:
\begin{itemize}
\item If $u\in t$, then
$\|x^u-c_t\|_2^2=\|e_u-(e_x+e_y+e_z)\|_2^2=2$.

\item If $u\notin t$, then
$\|x^u-c_t\|_2^2=\|e_u-(e_x+e_y+e_z)\|_2^2=4$.
\end{itemize}
We define two protected groups $R$ and $B$ as:
$$
R := X \cup Y, \qquad B:=Z,
$$
and we set the fairness parameters as:
\begin{equation}\label{paramset}
\alpha_R = \beta_R = \frac23, \quad \alpha_B = \beta_B = \frac13.
\end{equation}
Consider the decision version of the fair assignment problem asking whether there is a fair assignment whose cost is at most $6q$.

If the 3DM instance has a perfect matching $M$, assign each point $u\in U$ to the unique triple $t\in M$ containing it. By the above distance computations, every assigned point has cost $2$, so the total cost is
$2|U|=2\cdot 3q=6q$.
Moreover, each nonempty cluster contains exactly two points from $R$ and one point from $B$, so the fairness constraints are satisfied.

Conversely, suppose that there is a fair assignment of cost at most $6q$. Since
there are $3q$ points and each assignment cost is at least $2$, every point must be assigned exactly with cost $2$. Therefore, each point $u$ is assigned
to a center $c_t$ such that $u\in t$. Now consider any nonempty cluster assigned to a center $c_t$, where
$t=(x,y,z)$. Since all assigned points must belong to $t$, the cluster is a
nonempty subset of $\{x,y,z\}$. The only such subset satisfying fairness constraints with parameters~\eqref{paramset}  is the entire set
$\{x,y,z\}$.
Hence, every nonempty cluster is exactly one triple from $T$.
Since each point is assigned to exactly one cluster, the nonempty clusters are pairwise disjoint. Moreover, every point belongs to some cluster, so the union of the nonempty clusters is $X \cup Y \cup Z$. As shown above, each nonempty cluster is exactly one triple from $T$. Therefore, the nonempty clusters form pairwise disjoint triples covering $X \cup Y \cup Z$, that is, they define a perfect 3DM.

Thus, we proved that the constructed fair assignment instance has a feasible assignment of cost
at most $6q$ if and only if the original perfect 3DM
instance is feasible. Hence, the decision version of Problem~\eqref{fairAssignI} is NP-hard. Since a polynomial-time algorithm for the optimization problem would also solve the decision problem by comparing the optimal objective value with the threshold $6q$, it follows that Problem~\eqref{fairAssignI} is NP-hard as well.
\end{proof}

Perhaps surprisingly, as we will show in the next section, the computational cost of the fair Lloyd's algorithm is not visibly affected by the choice of fairness constraints. Namely, for instances with $n \leq 3000$, thanks to {\tt Gurobi}'s highly efficient cutting-plane technology, the computational cost of solving the integer program~\eqref{fairAssignI} is comparable to that of the LP relaxation of~\eqref{fairAssignII} (see Table~\ref{table:fairLloydTiming}).

\subsection{Numerical experiments}

To evaluate the performance of the proposed algorithm, we conduct experiments on real-world data sets\footnote{The source code as well as the data sets are available at
\href{https://github.com/Yakun1125/cutLPK/}{\tt https://github.com/Yakun1125/cutLPK/}.}, available from the UCI Machine Learning Repository~\cite{dua2017uci}. The data sets are summarized in Table~\ref{table:data stats}, where for each data set, we list the number of points (size) and the data balance as defined by~\eqref{balance}. The {\tt Titanic} data set provides two natural candidates for the protected attribute: gender (male and female) and passenger class (1st, 2nd, and 3rd class). We therefore create two variants: {\tt Titanic 2}, which treats gender as the protected attribute, and {\tt Titanic 3}, which treats passenger class as the protected attribute. For all data sets, the protected attribute is excluded from distance computations. 
To assess the behavior of the proposed algorithm on larger data sets, we include two additional data sets obtained by sampling $2000$ and $3000$ points from the {\tt Adult} data set, respectively.

\begin{table}[htp]
\footnotesize
\caption{Summary of the real-world data sets for fair K-means clustering.}
\centering
\begin{tabular}{|c|cc|}
    \hline
    Data set & size & balance \\ \hline
    Heart Disease Hungarian (HH) & 294 & 0.38 \\
    Heart Disease Cleveland (HC) & 297 & 0.48 \\
    Students Math (SM) & 395 & 0.90 \\
    WDBC & 569 & 0.59 \\
    Students Portuguese (SP) & 649 & 0.69 \\
    Titanic 2 & 721 & 0.57 \\
    Titanic 3  & 721 & 0.49 \\
    Credit & 1000 & 0.45 \\
    Adult sample 1 (AS1) & 2000 & 0.50 \\
    Adult sample 2 (AS2) & 3000 & 0.50 \\\hline
\end{tabular}
\label{table:data stats}
\end{table}

To impose group fairness, we consider two variants, namely, inequalities~\eqref{pf1} and inequalities~\eqref{pf2}.
For inequalities~\eqref{pf1}, we set
$$
\alpha_g=\frac{|\X_g|}{\rho n}, \quad \beta_g= \frac{\rho |\X_g|}{n}, \quad \forall g \in \G
,$$
where $\rho \in (0,1]$. 
We refer to inequalities~\eqref{pf1} with the above choice of parameters as \emph{$\alpha$-fair constraints}. In this formulation, setting $\rho=1$ indicates that the proportion of each protected group in every cluster matches its proportion in the full data set. For inequalities~\eqref{pf2}, we set 
$$
\tau_g = \frac{\rho}{K}, \quad \forall g \in \G,
$$
where again $\rho \in (0,1]$.
We refer to inequalities~\eqref{pf2} with the above choice of parameters as \emph{$\tau$-fair constraints}. In this formulation, setting $\rho = 1$ indicates that each cluster contains a $\frac{1}{K}$ fraction of every protected group.
For each data set, we set $K \in \{2, 3, 4, 5\}$ and  $\rho \in \{0.99, 0.9, 0.8, 0.7\}$. We do not consider $\rho = 1$, because it often leads to infeasible problems.
For each combination of $K$ and $\rho$, we solve the fair K-means clustering problem once with $\alpha$-fair constraints and once with $\tau$-fair constraints using our proposed cutting-plane algorithm.

Our cutting-plane algorithm is initialized with $p_{\rm init} = 10^6$ inequalities and terminates if at least one of the following conditions is satisfied:
\begin{itemize}
\item The relative optimality gap
$\delta_{g} = \frac{f_{\rm ub} - f_{\rm lb}} {f_{\rm ub}}$ 
is smaller than the optimality tolerance $\epsilon_{\rm opt}=10^{-4}$. 
\item The run time exceeds the time limit  $T= 10{,}800$ seconds.
\item No violated inequalities~\eqref{eq3k} are found by Algorithm~\ref{alg:sep} within $300$ seconds. 
\item  $\delta_g$ does not decrease after four consecutive iterations with $t_{\max} = K$.
\end{itemize}

All experiments are conducted on {\tt Google Colab} using an Intel(R) Xeon(R) CPU @ 2.20GHz with 8 cores and 50 GB of RAM; the GPU is an NVIDIA G4 with 95.6 GB of RAM. We use {\tt cuPDLPx}~\cite{lu2025cupdlpx}, a GPU-based first-order solver, for solving the LP relaxations and {\tt Gurobi}~\cite{gurobi} for solving Problems~\eqref{fairAssignI} and~\eqref{fairAssignII}.

\paragraph{Results overview.}
In total, $286$ instances were tested; of these, $253$ instances achieved a relative optimality gap below $1\%$, and all but three instances achieved a relative optimality gap below $2\%$. This highlights the strength of the proposed LP relaxation for fair K-means clustering and is in agreement with the computational results of~\cite{derKhaWan24} regarding the strength of the LP relaxation for the (unfair) K-means clustering.
All instances with $n \leq 1000$ terminate within the three-hour time limit. Among the instances sampled from the {\tt Adult} data set  with $n \in \{2000, 3000\}$, $18$ instances exceed the time limit, though the algorithm produces solutions with small optimality gaps in most cases.
From Figure~\ref{fig:performance_profile}, it can be seen  that nearly $40\%$ of the instances reach an optimality gap below $0.1\%$ within $2000$ seconds.
In fact, more than $72\%$ of the instances achieve an optimality gap below $1\%$ within $500$ seconds.
This is important because in almost all clustering applications, an optimality gap of $1\%$ is sufficient. It is worth noting that in $131$ out of $286$ instances, our algorithm finds better solutions than those found by the fair Lloyd's algorithm in the initialization step (see Tables~\ref{table:ratio_fairness_objbal}).

\begin{figure}[htp]
    \centering
    \includegraphics[width=0.5\textwidth]{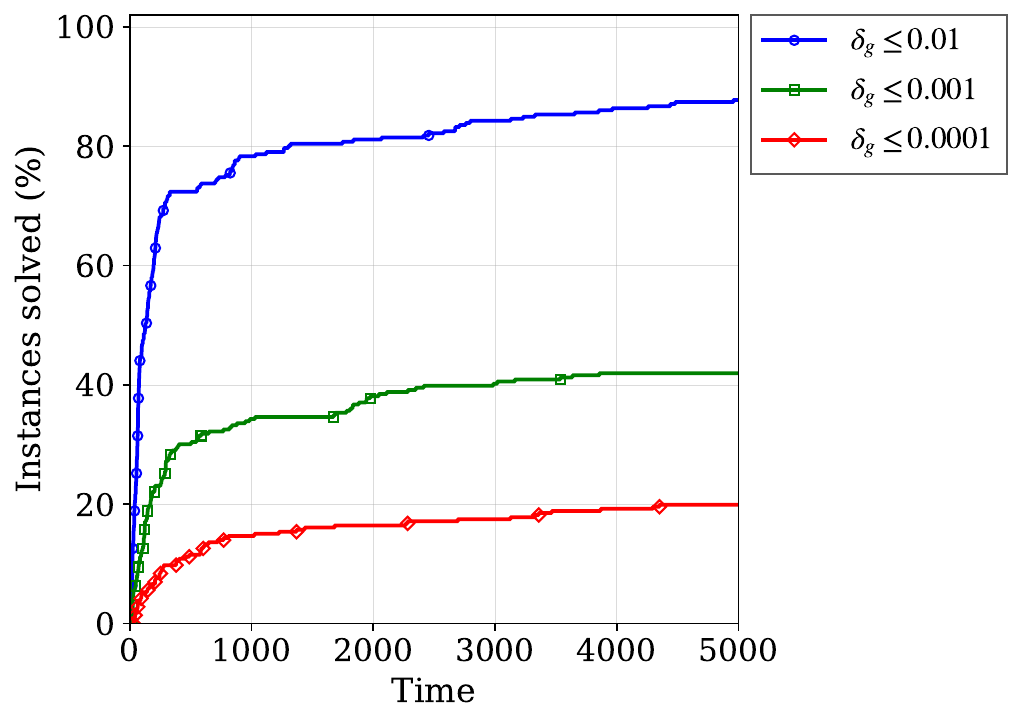}
    \caption{Performance profile for 286 instances for fair K-means clustering. The curves show the percentage of instances whose relative optimality gap $\delta_g$ is at most $10^{-2}$, $10^{-3}$, and $10^{-4}$ within a given time limit.}
    \label{fig:performance_profile}
\end{figure}

\paragraph{Price of fairness.}
The detailed results are given in  Tables~\ref{table:ratio_fairness_time}--\ref{table:ratio_fairness_objbal}.
For comparison, for each data set, we solve the clustering problem without fairness constraints; \ie we use Algorithm~\ref{alg:Cutting} to solve the (unfair) K-means clustering problem.
If a given choice of $\rho$ yields a clustering balance nearly identical to that of the (unfair) K-means clustering, we omit those cases.  The results show that, in the case of data sets for which the K-means solution is highly unbalanced, including the fairness constraints leads to a significant improvement in the clustering balance. For example, for {\tt WDBC} with $K=3,4,5$, the balance of the K-means solution is zero, whereas setting $\rho=0.99$, both fair formulations yield a clustering balance around $0.58$--$0.59$. Similarly, for {\tt HH} with $K=4,5$, the balance of the K-means solution is zero, while the fair formulations with $\rho=0.99$ yield clustering balances around $0.36$--$0.38$. Perhaps surprisingly, this improvement does not necessarily come with a large increase in the objective value: for data sets {\tt SM}, {\tt SP}, {\tt AS1}, {\tt AS2}, and {\tt Credit}, the objective values of the fair formulations remain close to those of the unfair formulation. However, experiments with data sets {\tt WDBC}, {\tt Titanic 2}, and {\tt Titanic 3} indicate that the price of fairness can be substantial, especially when $K$ is large. The computational cost of imposing fairness is also evident from Table~\ref{table:ratio_fairness_time}: the algorithm for solving the (unfair) K-means clustering terminates in at most a few hundred seconds for all instances with $n \leq 1000$, and, on these instances, solving the unfair formulation is on average about $2.5$ times faster than solving the fair K-means clustering with $\alpha$-fair or $\tau$-fair constraints.
Moreover, upon termination, the unfair algorithm often returns a solution with a smaller relative optimality gap.
These observations reflect the additional complexity of adding fairness constraints to Problem~\eqref{lp:pK}.

\begingroup
\footnotesize
\setlength{\tabcolsep}{3pt}
\begin{longtable}{|lcc c|cc|cc|cc|}
\caption{Time (seconds) and relative optimality gap $\delta_g$ for the fair K-means clustering with $\alpha$-fair constraints ($\alpha$-fair), fair K-means clustering with $\tau$-fair constraints ($\tau$-fair), and K-means clustering (unfair ) for data sets considered.}\label{table:ratio_fairness_time}\\
\hline
Data set & $n$ & $K$ & $\rho$ & \multicolumn{2}{c|}{$\alpha$-fair} & \multicolumn{2}{c|}{$\tau$-fair} & \multicolumn{2}{c|}{unfair} \\
\cline{5-10}
 &  &  &  & Time & $\delta_g$ & Time & $\delta_g$ & Time & $\delta_g$ \\
\hline
\endfirsthead
\caption[]{(continued)}\\
\hline
Data set & $n$ & $K$ & $\rho$ & \multicolumn{2}{c|}{$\alpha$-fair} & \multicolumn{2}{c|}{$\tau$-fair} & \multicolumn{2}{c|}{unfair} \\
\cline{5-10}
 &  &  &  & Time & $\delta_g$ & Time & $\delta_g$ & Time & $\delta_g$ \\
\hline
\endhead
\hline
\endfoot
\hline
\endlastfoot
\multirow{14}{*}[-2pt]{HH} & \multirow{14}{*}[-2pt]{294} & 2 & 0.99 & 123.8 & $5.4\times 10^{-4}$ & 70.1 & $9.0\times 10^{-5}$ & 70.0 & $6.4\times 10^{-5}$ \\
 & & & 0.9 & 45.0 & $3.2\times 10^{-4}$ & 44.8 & $1.0\times 10^{-4}$ &  &     \\
 & & & 0.8 & 29.7 & $8.2\times 10^{-5}$ & 288.7 & $1.0\times 10^{-4}$ &  &     \\
 & & 3 & 0.99 & 123.7 & $6.4\times 10^{-3}$ & 36.8 & $1.6\times 10^{-5}$ & 43.9 & $3.3\times 10^{-5}$ \\
 & & & 0.9 & 75.5 & $1.5\times 10^{-3}$ & 100.5 & $1.2\times 10^{-4}$ &  &     \\
 & & & 0.8 & 77.7 & $2.5\times 10^{-5}$ & 75.2 & $3.4\times 10^{-4}$ &  &     \\
 & & 4 & 0.99 & 252.5 & $3.4\times 10^{-2}$ & 79.2 & $6.9\times 10^{-3}$ & 45.0 & $3.0\times 10^{-5}$ \\
 & & & 0.9 & 113.3 & $5.3\times 10^{-3}$ & 139.3 & $1.8\times 10^{-3}$ &  &     \\
 & & & 0.8 & 121.5 & $1.0\times 10^{-2}$ & 132.3 & $6.1\times 10^{-3}$ &  &     \\
 & & & 0.7 & 152.8 & $1.4\times 10^{-2}$ & 118.9 & $7.8\times 10^{-3}$ &  &     \\
 & & 5 & 0.99 & 232.2 & $5.5\times 10^{-2}$ & 78.3 & $9.5\times 10^{-3}$ & 28.2 & $4.5\times 10^{-5}$ \\
 & & & 0.9 & 169.4 & $7.8\times 10^{-3}$ & 120.4 & $7.2\times 10^{-3}$ &  &     \\
 & & & 0.8 & 204.9 & $1.1\times 10^{-2}$ & 125.4 & $1.1\times 10^{-2}$ &  &     \\
 & & & 0.7 & 182.2 & $1.5\times 10^{-2}$ & 117.2 & $1.0\times 10^{-2}$ &  &     \\
\hline
\multirow{16}{*}[-2pt]{HC} & \multirow{16}{*}[-2pt]{297} & 2 & 0.99 & 92.8 & $1.0\times 10^{-4}$ & 58.1 & $8.7\times 10^{-5}$ & 50.7 & $2.7\times 10^{-5}$ \\
 & & & 0.9 & 42.8 & $9.1\times 10^{-5}$ & 65.0 & $8.0\times 10^{-6}$ &  &     \\
 & & & 0.8 & 93.2 & $2.0\times 10^{-4}$ & 63.3 & $4.2\times 10^{-5}$ &  &     \\
 & & & 0.7 & 30.5 & $1.4\times 10^{-4}$ & 69.7 & $2.9\times 10^{-5}$ &  &     \\
 & & 3 & 0.99 & 86.0 & $1.1\times 10^{-2}$ & 67.2 & $9.1\times 10^{-4}$ & 22.1 & $3.1\times 10^{-5}$ \\
 & & & 0.9 & 58.8 & $1.6\times 10^{-3}$ & 32.4 & $3.8\times 10^{-5}$ &  &     \\
 & & & 0.8 & 47.4 & $1.4\times 10^{-4}$ & 95.9 & $12.3\times 10^{-4}$ &  &     \\
 & & & 0.7 & 34.7 & $1.2\times 10^{-4}$ & 74.5 & $9.6\times 10^{-4}$ &  &     \\
 & & 4 & 0.99 & 100.8 & $4.8\times 10^{-3}$ & 155.1 & $3.7\times 10^{-3}$ & 47.6 & $5.2\times 10^{-5}$ \\
 & & & 0.9 & 92.6 & $3.5\times 10^{-3}$ & 103.5 & $1.9\times 10^{-3}$ &  &     \\
 & & & 0.8 & 85.8 & $2.6\times 10^{-3}$ & 135.9 & $3.1\times 10^{-4}$ &  &     \\
 & & & 0.7 & 102.3 & $2.2\times 10^{-3}$ & 126.8 & $1.1\times 10^{-3}$ &  &     \\
 & & 5 & 0.99 & 99.5 & $9.3\times 10^{-3}$ & 195.4 & $6.6\times 10^{-3}$ & 28.8 & $4.9\times 10^{-5}$ \\
 & & & 0.9 & 103.4 & $3.6\times 10^{-3}$ & 189.2 & $5.3\times 10^{-3}$ &  &     \\
 & & & 0.8 & 106.8 & $4.6\times 10^{-3}$ & 286.4 & $4.6\times 10^{-3}$ &  &     \\
 & & & 0.7 & 147.0 & $4.9\times 10^{-3}$ & 134.9 & $2.1\times 10^{-3}$ &  &     \\
\hline
\multirow{13}{*}[-2pt]{SM} & \multirow{13}{*}[-2pt]{395} & 2 & 0.99 & 44.7 & $9.6\times 10^{-5}$ & 100.4 & $7.8\times 10^{-5}$ & 39.8 & $6.1\times 10^{-5}$ \\
 & & 3 & 0.99 & 134.3 & $7.0\times 10^{-4}$ & 152.5 & $2.7\times 10^{-3}$ & 72.7 & $6.1\times 10^{-5}$ \\
 & & & 0.9 & 148.2 & $8.9\times 10^{-4}$ & 163.2 & $3.2\times 10^{-3}$ &  &     \\
 & & & 0.8 & 139.4 & $1.5\times 10^{-4}$ & 154.0 & $2.5\times 10^{-3}$ &  &     \\
 & & & 0.7 & 96.7 & $5.2\times 10^{-5}$ & 161.1 & $2.1\times 10^{-3}$ &  &     \\
 & & 4 & 0.99 & 141.6 & $8.8\times 10^{-4}$ & 174.2 & $1.9\times 10^{-3}$ & 100.0 & $7.8\times 10^{-5}$ \\
 & & & 0.9 & 165.0 & $2.0\times 10^{-3}$ & 179.6 & $2.2\times 10^{-3}$ &  &     \\
 & & & 0.8 & 150.8 & $6.3\times 10^{-4}$ & 192.3 & $1.9\times 10^{-3}$ &  &     \\
 & & & 0.7 & 115.2 & $1.3\times 10^{-4}$ & 174.9 & $8.3\times 10^{-4}$ &  &     \\
 & & 5 & 0.99 & 166.3 & $2.8\times 10^{-3}$ & 182.4 & $4.4\times 10^{-3}$ & 188.2 & $1.5\times 10^{-4}$ \\
 & & & 0.9 & 201.5 & $2.6\times 10^{-3}$ & 168.0 & $3.5\times 10^{-3}$ &  &     \\
 & & & 0.8 & 170.1 & $2.0\times 10^{-3}$ & 171.3 & $2.2\times 10^{-3}$ &  &     \\
 & & & 0.7 & 171.1 & $1.5\times 10^{-3}$ & 169.6 & $1.9\times 10^{-3}$ &  &     \\
\hline
\multirow{6}{*}[-2pt]{WDBC} & \multirow{6}{*}[-2pt]{569} & 2 & 0.99 & 843.3 & $1.3\times 10^{-3}$ & 373.4 & $1.6\times 10^{-3}$ & 270.8 & $7.8\times 10^{-5}$ \\
 & & & 0.9 & 310.8 & $7.4\times 10^{-3}$ & 346.9 & $5.6\times 10^{-3}$ &  &      \\
 & & & 0.8 & 179.1 & $8.5\times 10^{-3}$ & 257.1 & $6.2\times 10^{-3}$ &  &      \\
 & & & 0.7 & 86.7 & $6.9\times 10^{-3}$ & 294.1 & $5.2\times 10^{-3}$ &  &      \\
 & & 3 & 0.99 & 163.8 & $2.4\times 10^{-3}$ & 219.6 & $2.0\times 10^{-3}$ & 167.7 & $5.9\times 10^{-6}$ \\
 & & & 0.9 & 233.4 & $8.2\times 10^{-3}$ & 309.9 & $1.3\times 10^{-2}$ &  &      \\
\multirow{10}{*}[-2pt]{WDBC} & \multirow{10}{*}[-2pt]{569} & 3 & 0.8 & 243.4 & $7.9\times 10^{-3}$ & 313.3 & $1.1\times 10^{-2}$ &  &      \\
 & & & 0.7 & 270.6 & $5.1\times 10^{-3}$ & 287.2 & $4.0\times 10^{-3}$ &  &      \\
 & & 4 & 0.99 & 189.0 & $1.4\times 10^{-3}$ & 655.2 & $1.4\times 10^{-3}$ & 135.3 & $8.6\times 10^{-5}$ \\
 & & & 0.9 & 207.0 & $7.9\times 10^{-3}$ & 353.7 & $1.9\times 10^{-2}$ &  &      \\
 & & & 0.8 & 267.1 & $1.0\times 10^{-2}$ & 350.3 & $1.6\times 10^{-2}$ &  &      \\
 & & & 0.7 & 281.3 & $7.3\times 10^{-3}$ & 332.9 & $8.5\times 10^{-3}$ &  &      \\
 & & 5 & 0.99 & 240.7 & $4.2\times 10^{-3}$ & 373.2 & $6.6\times 10^{-3}$ & 154.2 & $2.8\times 10^{-5}$ \\
 & & & 0.9 & 279.3 & $1.1\times 10^{-2}$ & 408.3 & $2.0\times 10^{-2}$ &  &      \\
 & & & 0.8 & 334.3 & $1.4\times 10^{-2}$ & 432.2 & $2.0\times 10^{-2}$ &  &      \\
 & & & 0.7 & 404.3 & $1.2\times 10^{-2}$ & 388.1 & $1.3\times 10^{-2}$ &  &      \\
\hline
\multirow{14}{*}[-2pt]{SP} & \multirow{14}{*}[-2pt]{649} & 2 & 0.99 & 141.2 & $8.9\times 10^{-5}$ & 148.0 & $7.1\times 10^{-5}$ & 98.1 & $2.6\times 10^{-5}$ \\
 & & & 0.9 & 88.4 & $4.7\times 10^{-5}$ & 164.9 & $5.1\times 10^{-5}$ &  &      \\
 & & 3 & 0.99 & 295.4 & $5.4\times 10^{-4}$ & 253.4 & $8.1\times 10^{-5}$ & 215.3 & $5.0\times 10^{-5}$ \\
 & & & 0.9 & 338.4 & $3.7\times 10^{-4}$ & 215.4 & $7.6\times 10^{-5}$ &  &      \\
 & & & 0.8 & 270.0 & $1.7\times 10^{-4}$ & 207.4 & $1.0\times 10^{-4}$ &  &      \\
 & & & 0.7 & 280.6 & $9.8\times 10^{-5}$ & 206.3 & $8.3\times 10^{-5}$ &  &      \\
 & & 4 & 0.99 & 409.8 & $2.0\times 10^{-3}$ & 423.3 & $2.8\times 10^{-3}$ & 284.8 & $9.3\times 10^{-5}$ \\
 & & & 0.9 & 403.6 & $1.4\times 10^{-3}$ & 466.0 & $2.5\times 10^{-3}$ &  &      \\
 & & & 0.8 & 382.4 & $6.6\times 10^{-4}$ & 481.0 & $2.6\times 10^{-3}$ &  &      \\
 & & & 0.7 & 418.1 & $2.4\times 10^{-4}$ & 439.0 & $2.3\times 10^{-3}$ &  &      \\
 & & 5 & 0.99 & 428.9 & $4.6\times 10^{-3}$ & 458.0 & $7.2\times 10^{-3}$ & 428.3 & $2.8\times 10^{-3}$ \\
 & & & 0.9 & 441.8 & $5.0\times 10^{-3}$ & 467.7 & $6.0\times 10^{-3}$ &  &      \\
 & & & 0.8 & 418.5 & $5.4\times 10^{-3}$ & 436.1 & $5.4\times 10^{-3}$ &  &      \\
 & & & 0.7 & 410.3 & $5.0\times 10^{-3}$ & 466.9 & $5.2\times 10^{-3}$ &  &      \\
\hline
\multirow{15}{*}[-2pt]{Titanic 2} & \multirow{15}{*}[-2pt]{721} & 2 & 0.99 & 589.1 & $7.4\times 10^{-4}$ & 477.7 & $2.9\times 10^{-4}$ & 317.5 & $6.8\times 10^{-5}$ \\
 & & & 0.9 & 454.9 & $1.4\times 10^{-3}$ & 406.9 & $1.2\times 10^{-4}$ &  &      \\
 & & & 0.8 & 553.5 & $3.4\times 10^{-4}$ & 450.3 & $1.1\times 10^{-4}$ &  &      \\
 & & & 0.7 & 579.6 & $2.1\times 10^{-4}$ & 1225.0 & $9.1\times 10^{-5}$ &  &      \\
 & & 3 & 0.99 & 1574.3 & $4.6\times 10^{-3}$ & 413.4 & $5.7\times 10^{-4}$ & 272.8 & $3.1\times 10^{-5}$ \\
 & & & 0.9 & 1124.7 & $1.3\times 10^{-2}$ & 697.1 & $1.6\times 10^{-3}$ &  &      \\
 & & & 0.8 & 1097.9 & $3.5\times 10^{-3}$ & 631.6 & $2.4\times 10^{-3}$ &  &      \\
 & & 4 & 0.99 & 1060.2 & $3.2\times 10^{-3}$ & 709.4 & $7.3\times 10^{-4}$ & 233.8 & $8.4\times 10^{-5}$ \\
 & & & 0.9 & 949.5 & $4.8\times 10^{-3}$ & 618.9 & $9.3\times 10^{-3}$ &  &      \\
 &  &  & 0.8 & 373.6 & $1.2\times 10^{-4}$ & 577.0 & $1.5\times 10^{-2}$ &  &      \\
 & & & 0.7 & 614.8 & $3.1\times 10^{-4}$ & 598.1 & $6.7\times 10^{-3}$ &  &      \\
 & & 5 & 0.99 & 911.8 & $5.1\times 10^{-3}$ & 703.1 & $5.3\times 10^{-3}$ & 184.8 & $1.6\times 10^{-7}$ \\
 & & & 0.9 & 797.0 & $7.4\times 10^{-3}$ & 654.0 & $3.3\times 10^{-3}$ &  &      \\
 & & & 0.8 & 411.3 & $2.5\times 10^{-3}$ & 475.6 & $1.0\times 10^{-3}$ &  &      \\
 & & & 0.7 & 770.6 & $3.3\times 10^{-4}$ & 768.9 & $1.2\times 10^{-3}$ &  &      \\
\hline
\multirow{13}{*}[-2pt]{Titanic 3} & \multirow{13}{*}[-2pt]{721} & 2 & 0.99 & 1127.4 & $6.2\times 10^{-4}$ & 1214.6 & $1.3\times 10^{-4}$ & 132.5 & $8.7\times 10^{-5}$ \\
 & & & 0.9 & 244.9 & $4.3\times 10^{-4}$ & 665.3 & $1.7\times 10^{-4}$ &  &      \\
 & & & 0.8 & 328.6 & $1.0\times 10^{-4}$ & 334.9 & $6.5\times 10^{-4}$ &  &      \\
 & & & 0.7 & 181.4 & $4.3\times 10^{-5}$ & 381.0 & $7.9\times 10^{-5}$ &  &      \\
 & & 3 & 0.99 & 1089.7 & $6.6\times 10^{-3}$ & 874.7 & $5.8\times 10^{-3}$ & 199.3 & $4.2\times 10^{-5}$ \\
 & & & 0.9 & 860.4 & $6.4\times 10^{-3}$ & 441.1 & $5.1\times 10^{-3}$ &  &      \\
 & & & 0.8 & 1124.3 & $9.3\times 10^{-3}$ & 519.8 & $4.7\times 10^{-3}$ &  &      \\
 & & & 0.7 & 786.7 & $1.4\times 10^{-2}$ & 688.9 & $1.0\times 10^{-3}$ &  &      \\
 & & 4 & 0.99 & 671.6 & $9.5\times 10^{-3}$ & 654.0 & $1.3\times 10^{-2}$ & 157.6 & $3.3\times 10^{-5}$ \\
 & & & 0.9 & 600.3 & $6.2\times 10^{-3}$ & 431.7 & $2.6\times 10^{-2}$ &  &      \\
 & & & 0.8 & 801.7 & $1.3\times 10^{-2}$ & 456.8 & $1.4\times 10^{-2}$ &  &      \\
 & & & 0.7 & 720.9 & $1.2\times 10^{-2}$ & 576.5 & $9.0\times 10^{-3}$ &  &      \\
 & & 5 & 0.99 & 970.6 & $1.4\times 10^{-2}$ & 1270.5 & $1.1\times 10^{-2}$ & 209.6 & $6.7\times 10^{-5}$ \\
 \multirow{3}{*}[-2pt]{Titanic 3} & \multirow{3}{*}[-2pt]{721} & 5 & 0.9 & 642.5 & $8.4\times 10^{-3}$ & 508.8 & $1.8\times 10^{-2}$ &  &      \\
 & & & 0.8 & 507.0 & $8.5\times 10^{-3}$ & 470.6 & $1.4\times 10^{-2}$ &  &      \\
 & & & 0.7 & 617.2 & $1.5\times 10^{-2}$ & 483.3 & $6.9\times 10^{-3}$ &  &      \\
\hline
\multirow{16}{*}[-2pt]{Credit} & \multirow{16}{*}[-2pt]{1000} & 2 & 0.99 & 616.2 & $8.0\times 10^{-5}$ & 1023.9 & $1.4\times 10^{-4}$ & 226.7 & $8.3\times 10^{-5}$ \\
 & & & 0.9 & 416.1 & $6.5\times 10^{-5}$ & 403.6 & $8.0\times 10^{-5}$ &  &      \\
 & & & 0.8 & 578.0 & $8.6\times 10^{-5}$ & 379.9 & $9.8\times 10^{-5}$ &  &      \\
 & & & 0.7 & 244.1 & $9.7\times 10^{-5}$ & 458.8 & $3.8\times 10^{-5}$ &  &      \\
 & & 3 & 0.99 & 1012.0 & $1.6\times 10^{-4}$ & 1280.4 & $1.5\times 10^{-4}$ & 575.8 & $3.9\times 10^{-5}$ \\
 & & & 0.9 & 944.1 & $6.6\times 10^{-4}$ & 1397.8 & $7.3\times 10^{-4}$ &  &      \\
 & & & 0.8 & 645.8 & $9.1\times 10^{-5}$ & 1342.8 & $1.4\times 10^{-3}$ &  &      \\
 & & & 0.7 & 581.7 & $4.8\times 10^{-5}$ & 1186.4 & $1.1\times 10^{-3}$ &  &      \\
 & & 4 & 0.99 & 958.0 & $4.3\times 10^{-4}$ & 1833.0 & $1.1\times 10^{-3}$ & 757.7 & $5.7\times 10^{-5}$ \\
 & & & 0.9 & 989.4 & $2.1\times 10^{-4}$ & 1512.3 & $2.0\times 10^{-3}$ &  &      \\
 & & & 0.8 & 953.7 & $1.9\times 10^{-4}$ & 1411.8 & $1.5\times 10^{-3}$ &  &      \\
 & & & 0.7 & 811.5 & $8.5\times 10^{-5}$ & 1288.7 & $4.5\times 10^{-4}$ &  &      \\
 & & 5 & 0.99 & 1398.3 & $1.1\times 10^{-4}$ & 1023.0 & $8.9\times 10^{-5}$ & 848.8 & $4.9\times 10^{-5}$ \\
 & & & 0.9 & 946.5 & $1.3\times 10^{-4}$ & 808.6 & $8.8\times 10^{-5}$ &  &      \\
 & & & 0.8 & 488.5 & $8.1\times 10^{-5}$ & 733.2 & $8.2\times 10^{-5}$ &  &      \\
 & & & 0.7 & 722.8 & $5.6\times 10^{-5}$ & 586.1 & $7.4\times 10^{-5}$ &  &      \\
 \hline
 \multirow{13}{*}[-2pt]{AS1} & \multirow{13}{*}[-2pt]{2000} & 2 & 0.99 & 1682.3 & $8.2\times 10^{-5}$ & $>$10800 & $3.9\times 10^{-4}$ & 1566.3 & $1.1\times 10^{-5}$ \\
 & & & 0.9 & 1369.8 & $2.0\times 10^{-5}$ & 2265.8 & $9.7\times 10^{-5}$ &  &      \\
 & & & 0.8 & 1435.4 & $2.9\times 10^{-5}$ & 2282.4 & $9.0\times 10^{-6}$ &  &      \\
 & & 3 & 0.99 & 7447.7 & $9.6\times 10^{-5}$ & 8817.3 & $9.5\times 10^{-5}$ & 2755.4 & $9.5\times 10^{-6}$ \\
 & & & 0.9 & 3359.9 & $4.0\times 10^{-5}$ & 3330.8 & $9.6\times 10^{-5}$ &  &      \\
 & & 4 & 0.99 & 3738.6 & $2.4\times 10^{-4}$ & 9089.6 & $2.2\times 10^{-4}$ & 2884.6 & $9.9\times 10^{-5}$ \\
 & & & 0.9 & 3912.6 & $2.2\times 10^{-4}$ & 5257.9 & $1.5\times 10^{-4}$ &  &      \\
 & & & 0.8 & 4251.0 & $1.1\times 10^{-4}$ & 3868.9 & $9.9\times 10^{-5}$ &  &      \\
 & & & 0.7 & 2695.2 & $7.6\times 10^{-5}$ & 3466.8 & $5.6\times 10^{-5}$ &  &      \\
 & & 5 & 0.99 & 4483.4 & $2.0\times 10^{-4}$ & 7852.1 & $6.1\times 10^{-3}$ & 3363.4 & $9.7\times 10^{-5}$ \\
 & & & 0.9 & 3972.2 & $1.4\times 10^{-4}$ & 6060.1 & $9.0\times 10^{-3}$ &  &      \\
 & & & 0.8 & 3128.5 & $6.4\times 10^{-5}$ & 5538.1 & $4.0\times 10^{-3}$ &  &      \\
 & & & 0.7 & 4351.8 & $1.1\times 10^{-6}$ & 5447.6 & $2.7\times 10^{-3}$ &  &      \\
\hline
\multirow{10}{*}[-2pt]{AS2} & \multirow{10}{*}[-2pt]{3000} & 2 & 0.99 & $>$10800 & $1.7\times 10^{-4}$ & $>$10800 & $8.8\times 10^{-4}$ & 5574.2 & $1.9\times 10^{-6}$ \\
 &  & & 0.9 & 4297.4 & $2.3\times 10^{-5}$ & 7613.0 & $9.3\times 10^{-5}$ &  &      \\
 & & 3 & 0.99 & $>$10800 & $5.4\times 10^{-4}$ & $>$10800 & $6.5\times 10^{-4}$ & 9595.2 & $2.3\times 10^{-5}$ \\
 & & & 0.9 & 8204.5 & $2.4\times 10^{-5}$ & $>$10800 & $6.5\times 10^{-4}$ &  &      \\
 & & 4 & 0.99 & $>$10800 & $2.0\times 10^{-4}$ & $>$10800 & $1.1\times 10^{-3}$ & 8334.5 & $2.4\times 10^{-5}$ \\
 & & & 0.9 & $>$10800 & $1.1\times 10^{-4}$ & $>$10800 & $5.7\times 10^{-4}$ &  &      \\
 & & & 0.8 & $>$10800 & $1.5\times 10^{-4}$ & $>$10800 & $2.3\times 10^{-4}$ &  &      \\
 & & 5 & 0.99 & $>$10800 & $3.9\times 10^{-3}$ & $>$10800 & $9.4\times 10^{-3}$ & $>$10800 & $3.0\times 10^{-4}$ \\
 & & & 0.9 & $>$10800 & $5.4\times 10^{-4}$ & $>$10800 & $1.3\times 10^{-2}$ &  &      \\
 & & & 0.8 & $>$10800 & $6.3\times 10^{-4}$ & $>$10800 & $7.9\times 10^{-3}$ &  &      \\
\end{longtable}
\endgroup

\begingroup
\footnotesize
\setlength{\tabcolsep}{3pt}
\begin{longtable}{|lcc c|cc|cc|cc|}
\caption{Objective value (cost) and clustering balance (balance) for the fair K-means clustering with $\alpha$-fair constraints ($\alpha$-fair), fair K-means clustering with $\tau$-fair constraints ($\tau$-fair), and K-means clustering (unfair ) for data sets considered.}\label{table:ratio_fairness_objbal}\\
\hline
Data set & $n$ & $K$ & $\rho$ & \multicolumn{2}{c|}{$\alpha$-fair} & \multicolumn{2}{c|}{$\tau$-fair} & \multicolumn{2}{c|}{unfair} \\
\cline{5-10}
 &  &  &  & cost & balance & cost & balance & cost & balance \\
\hline
\endfirsthead
\caption[]{(continued)}\\
\hline
Data set & $n$ & $K$ & $\rho$ & \multicolumn{2}{c|}{$\alpha$-fair} & \multicolumn{2}{c|}{$\tau$-fair} & \multicolumn{2}{c|}{unfair} \\
\cline{5-10}
 &  &  &  & cost & balance & cost & balance & cost & balance \\
\hline
\endhead
\hline
\endfoot
\hline
\endlastfoot
HH & 294 & 2 & 0.99 & 3072.1 & 0.38 & 3184.1 & 0.38 & 3046.0 & 0.25 \\
 \multirow{13}{*}[-2pt]{HH} & \multirow{13}{*}[-2pt]{294} & 2& 0.9 & 3052.7 & 0.33 & 3145.2 & 0.38 &  &  \\
 & & & 0.8 & 3047.3 & 0.29 & 3103.7 & 0.38 &  &  \\
 & & 3 & 0.99 & 2836.6 & 0.38 & 2877.5$^*$ & 0.38 & 2759.0 & 0.26 \\
 & & & 0.9 & 2797.1 & 0.33 & 2860.9$^*$ & 0.33 &  &  \\
 & & & 0.8 & 2788.8 & 0.29 & 2853.8$^*$ & 0.29 &  &  \\
 & & 4 & 0.99 & 2664.4 & 0.38 & 2763.7$^*$ & 0.37 & 2497.4 & 0.00 \\
 & & & 0.9 & 2560.0 & 0.33 & 2725.0$^*$ & 0.28 &  &  \\
 & & & 0.8 & 2556.6 & 0.29 & 2719.0$^*$ & 0.25 &  &  \\
 & & & 0.7 & 2554.9 & 0.24 & 2707.7 & 0.25 &  &  \\
 & & 5 & 0.99 & 2528.7 & 0.38 & 2653.2 & 0.36 & 2238.9 & 0.00 \\
 & & & 0.9 & 2367.8$^*$ & 0.33 & 2620.1$^*$ & 0.28 &  &  \\
 & & & 0.8 & 2355.0 & 0.28 & 2601.9 & 0.23 &  &  \\
 & & & 0.7 & 2342.6 & 0.24 & 2574.0 & 0.29 &  &  \\
\hline
\multirow{16}{*}[-2pt]{HC} & \multirow{16}{*}[-2pt]{297} & 2 & 0.99 & 4512.5 & 0.47 & 4535.4 & 0.48 & 4456.5 & 0.24 \\
 & & & 0.9 & 4490.4 & 0.41 & 4508.0 & 0.40 &  &  \\
 & & & 0.8 & 4473.7 & 0.35 & 4490.1 & 0.38 &  &  \\
 & & & 0.7 & 4463.1 & 0.29 & 4476.1 & 0.34 &  &  \\
 & & 3 & 0.99 & 4198.0 & 0.47 & 4221.7$^*$ & 0.48 & 4122.5 & 0.25 \\
 & & & 0.9 & 4166.8 & 0.41 & 4155.8 & 0.37 &  &  \\
 & & & 0.8 & 4141.2 & 0.35 & 4144.4 & 0.30 &  &  \\
 & & & 0.7 & 4128.9$^*$ & 0.29 & 4135.3 & 0.28 &  &  \\
 & & 4 & 0.99 & 3933.1 & 0.47 & 4010.4 & 0.47 & 3819.6 & 0.23 \\
 & & & 0.9 & 3888.7 & 0.41 & 3927.9$^*$ & 0.35 &  &  \\
 & & & 0.8 & 3859.9 & 0.35 & 3881.8$^*$ & 0.28 &  &  \\
 & & & 0.7 & 3840.1$^*$ & 0.29 & 3860.9 & 0.24 &  &  \\
 & & 5 & 0.99 & 3702.8 & 0.47 & 3822.9$^*$ & 0.46 & 3528.2 & 0.06 \\
 & & & 0.9 & 3632.6 & 0.41 & 3750.6$^*$ & 0.34 &  &  \\
 & & & 0.8 & 3598.4 & 0.35 & 3699.7 & 0.27 &  &  \\
 & & & 0.7 & 3576.5$^*$ & 0.29 & 3658.2 & 0.24 &  &  \\
\hline
\multirow{13}{*}[-2pt]{SM} & \multirow{13}{*}[-2pt]{395} & 2 & 0.99 & 8017.9 & 0.89 & 8034.1 & 0.90 & 8017.9 & 0.88 \\
 & & 3 & 0.99 & 7619.4 & 0.88 & 7661.1$^*$ & 0.90 & 7559.2 & 0.46 \\
 & & & 0.9 & 7599.2 & 0.75 & 7638.3 & 0.89 &  &  \\
 & & & 0.8 & 7578.4 & 0.65 & 7617.2$^*$ & 0.72 &  &  \\
 & & & 0.7 & 7565.8$^*$ & 0.55 & 7601.8 & 0.67 &  &  \\
 & & 4 & 0.99 & 7294.1$^*$ & 0.88 & 7333.6$^*$ & 0.88 & 7218.3 & 0.45 \\
 & & & 0.9 & 7274.4$^*$ & 0.75 & 7302.3$^*$ & 0.66 &  &  \\
 & & & 0.8 & 7242.9$^*$ & 0.61 & 7277.0$^*$ & 0.61 &  &  \\
 & & & 0.7 & 7221.5$^*$ & 0.50 & 7252.8$^*$ & 0.58 &  &  \\
 & & 5 & 0.99 & 7060.1$^*$ & 0.89 & 7098.7$^*$ & 0.84 & 6966.8 & 0.30 \\
 & & & 0.9 & 7025.8$^*$ & 0.74 & 7058.9 & 0.71 &  &  \\
 & & & 0.8 & 6999.7$^*$ & 0.62 & 7023.7$^*$ & 0.59 &  &  \\
 & & & 0.7 & 6982.9$^*$ & 0.50 & 7000.2$^*$ & 0.50 &  &  \\
\hline
\multirow{10}{*}[-2pt]{WDBC} & \multirow{10}{*}[-2pt]{569} & 2 & 0.99 & 14884.5$^*$ & 0.59 & 15016.3$^*$ & 0.58 & 11595.5 & 0.08 \\
 & & & 0.9 & 14665.8 & 0.53 & 14680.1 & 0.49 &  &  \\
 & & & 0.8 & 14262.2 & 0.43 & 14225.2 & 0.40 &  &  \\
 & & & 0.7 & 13728.3 & 0.35 & 13741.1 & 0.32 &  &  \\
 & & 3 & 0.99 & 13915.8$^*$ & 0.59 & 14065.6$^*$ & 0.58 & 10061.8 & 0.00 \\
 & & & 0.9 & 13608.0 & 0.51 & 13618.0 & 0.45 &  &  \\
 & & & 0.8 & 13105.9$^*$ & 0.43 & 13049.0$^*$ & 0.35 &  &  \\
 & & & 0.7 & 12490.8 & 0.35 & 12457.5$^*$ & 0.26 &  &  \\
 & & 4 & 0.99 & 13143.7$^*$ & 0.58 & 13517.9$^*$ & 0.59 & 9257.0 & 0.00 \\
 & & & 0.9 & 12819.2 & 0.51 & 12971.5 & 0.42 &  &  \\
 \multirow{6}{*}[-2pt]{WDBC}& \multirow{6}{*}[-2pt]{569}& 4 & 0.8 & 12346.3 & 0.42 & 12323.2$^*$ & 0.30 &  &  \\
 & & & 0.7 & 11740.9 & 0.35 & 11712.1 & 0.23 &  &  \\
 & & 5 & 0.99 & 12717.9 & 0.59 & 13009.8$^*$ & 0.58 & 8553.5 & 0.00 \\
 & & & 0.9 & 12364.0$^*$ & 0.51 & 12498.2$^*$ & 0.40 &  &  \\
 & & & 0.8 & 11866.7 & 0.42 & 11767.5 & 0.27 &  &  \\
 & & & 0.7 & 11246.5$^*$ & 0.35 & 11107.6$^*$ & 0.24 &  &  \\
\hline
\multirow{14}{*}[-2pt]{SP} & \multirow{14}{*}[-2pt]{649} & 2 & 0.99 & 13017.4$^*$ & 0.69 & 13037.9$^*$ & 0.68 & 12977.2 & 0.58 \\
 & & & 0.9 & 12980.5 & 0.60 & 13007.9 & 0.62 &  &  \\
 & & 3 & 0.99 & 12439.6$^*$ & 0.68 & 12442.9$^*$ & 0.68 & 12320.2 & 0.35 \\
 & & & 0.9 & 12392.5$^*$ & 0.59 & 12382.8$^*$ & 0.54 &  &  \\
 & & & 0.8 & 12349.9 & 0.49 & 12349.1$^*$ & 0.45 &  &  \\
 & & & 0.7 & 12325.5$^*$ & 0.40 & 12330.7$^*$ & 0.39 &  &  \\
 & & 4 & 0.99 & 11952.4$^*$ & 0.68 & 12018.2$^*$ & 0.67 & 11801.1 & 0.36 \\
 & & & 0.9 & 11897.0$^*$ & 0.59 & 11954.1$^*$ & 0.49 &  &  \\
 & & & 0.8 & 11850.9$^*$ & 0.49 & 11913.6$^*$ & 0.42 &  &  \\
 & & & 0.7 & 11818.8$^*$ & 0.40 & 11882.5 & 0.42 &  &  \\
 & & 5 & 0.99 & 11598.2$^*$ & 0.68 & 11703.6$^*$ & 0.67 & 11445.6 & 0.29 \\
 & & & 0.9 & 11550.9 & 0.59 & 11614.9 & 0.48 &  &  \\
 & & & 0.8 & 11516.6$^*$ & 0.49 & 11562.9$^*$ & 0.38 &  &  \\
 & & & 0.7 & 11484.8 & 0.40 & 11523.5$^*$ & 0.36 &  &  \\
\hline
\multirow{15}{*}[-2pt]{Titanic 2} & \multirow{15}{*}[-2pt]{721} & 2 & 0.99 & 4733.2 & 0.57 & 4837.2 & 0.57 & 4465.8 & 0.31 \\
 & & & 0.9 & 4668.7 & 0.52 & 4713.6 & 0.47 &  &  \\
 & & & 0.8 & 4569.7 & 0.43 & 4594.9 & 0.38 &  &  \\
 & & & 0.7 & 4478.8 & 0.34 & 4491.9 & 0.31 &  &  \\
 & & 3 & 0.99 & 3999.6 & 0.57 & 4232.1$^*$ & 0.56 & 3687.7 & 0.29 \\
 & & & 0.9 & 3922.3 & 0.53 & 4070.9 & 0.43 &  &  \\
 & & & 0.8 & 3798.6 & 0.43 & 3909.4 & 0.33 &  &  \\
 & & 4 & 0.99 & 3342.7 & 0.56 & 3864.4$^*$ & 0.56 & 2961.4 & 0.22 \\
 & & & 0.9 & 3223.7 & 0.49 & 3701.8 & 0.40 &  &  \\
 & & & 0.8 & 3108.3$^*$ & 0.41 & 3554.7 & 0.29 &  &  \\
 & & & 0.7 & 3042.4 & 0.34 & 3420.6 & 0.25 &  &  \\
 & & 5 & 0.99 & 3059.7$^*$ & 0.56 & 3587.4 & 0.55 & 2652.5 & 0.14 \\
 & & & 0.9 & 2947.6 & 0.49 & 3399.0 & 0.38 &  &  \\
 & & & 0.8 & 2822.7 & 0.41 & 3208.7 & 0.26 &  &  \\
 & & & 0.7 & 2738.7 & 0.34 & 3069.9 & 0.19 &  &  \\
\hline
\multirow{16}{*}[-2pt]{Titanic 3} & \multirow{16}{*}[-2pt]{721} & 2 & 0.99 & 4515.4 & 0.49 & 4589.9 & 0.48 & 4424.2 & 0.35 \\
 & & & 0.9 & 4467.6 & 0.43 & 4509.9 & 0.43 &  &  \\
 & & & 0.8 & 4430.7 & 0.37 & 4454.3 & 0.35 &  &  \\
 & & & 0.7 & 4424.2 & 0.35 & 4424.2 & 0.35 &  &  \\
 & & 3 & 0.99 & 3955.5 & 0.48 & 4138.3 & 0.48 & 3649.9 & 0.08 \\
 & & & 0.9 & 3881.5 & 0.42 & 4019.6 & 0.40 &  &  \\
 & & & 0.8 & 3827.0 & 0.33 & 3938.1 & 0.30 &  &  \\
 & & & 0.7 & 3795.4 & 0.29 & 3884.1 & 0.28 &  &  \\
 & & 4 & 0.99 & 3547.4 & 0.48 & 3862.6 & 0.47 & 3086.9 & 0.08 \\
 & & & 0.9 & 3456.3$^*$ & 0.41 & 3754.8$^*$ & 0.37 &  &  \\
 & & & 0.8 & 3397.9 & 0.33 & 3605.2$^*$ & 0.26 &  &  \\
 & & & 0.7 & 3315.3 & 0.29 & 3522.8$^*$ & 0.23 &  &  \\
 & & 5 & 0.99 & 3298.8 & 0.48 & 3587.8 & 0.48 & 2724.6 & 0.08 \\
 & & & 0.9 & 3162.6$^*$ & 0.40 & 3455.6 & 0.33 &  &  \\
 & & & 0.8 & 3060.6 & 0.33 & 3305.4$^*$ & 0.24 &  &  \\
 & & & 0.7 & 3003.7 & 0.29 & 3202.8 & 0.21 &  &  \\
\hline
Credit& 1000 & 2 & 0.99 & 5926.4$^*$ & 0.44 & 6098.5 & 0.45 & 5891.4 & 0.28 \\
\multirow{15}{*}[-2pt]{Credit} & \multirow{15}{*}[-2pt]{1000} & 2 & 0.9 & 5909.1$^*$ & 0.39 & 6057.7 & 0.45 &  &  \\
 & & & 0.8 & 5896.3 & 0.33 & 6012.8 & 0.45 &  &  \\
 & & & 0.7 & 5891.4 & 0.28 & 5971.8 & 0.45 &  &  \\
 & & 3 & 0.99 & 5339.5$^*$ & 0.44 & 5450.9$^*$ & 0.44 & 5298.3 & 0.30 \\
 & & & 0.9 & 5319.1$^*$ & 0.39 & 5415.4 & 0.40 &  &  \\
 & & & 0.8 & 5300.2 & 0.33 & 5384.9 & 0.40 &  &  \\
 & & & 0.7 & 5298.3$^*$ & 0.30 & 5355.1 & 0.39 &  &  \\
 & & 4 & 0.99 & 4887.0$^*$ & 0.44 & 5001.5 & 0.44 & 4853.3 & 0.30 \\
 & & & 0.9 & 4865.0 & 0.39 & 4951.6$^*$ & 0.39 &  &  \\
 & & & 0.8 & 4855.4$^*$ & 0.33 & 4907.8 & 0.41 &  &  \\
 & & & 0.7 & 4853.3$^*$ & 0.30 & 4877.5$^*$ & 0.39 &  &  \\
 & & 5 & 0.99 & 4555.9$^*$ & 0.44 & 4665.6$^*$ & 0.44 & 4525.4 & 0.30 \\
 & & & 0.9 & 4539.0$^*$ & 0.39 & 4601.9$^*$ & 0.43 &  &  \\
 & & & 0.8 & 4527.8$^*$ & 0.33 & 4560.5$^*$ & 0.40 &  &  \\
 & & & 0.7 & 4525.5$^*$ & 0.31 & 4540.9$^*$ & 0.36 &  &  \\
\hline
\multirow{13}{*}[-2pt]{AS1} & \multirow{13}{*}[-2pt]{2000} & 2 & 0.99 & 144.0$^*$ & 0.49 & 145.1 & 0.50 & 143.7 & 0.42 \\
 & & & 0.9 & 143.7$^*$ & 0.43 & 144.3 & 0.48 &  &  \\
 & & & 0.8 & 143.7 & 0.42 & 143.8$^*$ & 0.45 &  &  \\
 & & 3 & 0.99 & 117.2$^*$ & 0.49 & 118.6$^*$ & 0.50 & 117.1 & 0.46 \\
 & & & 0.9 & 117.1 & 0.46 & 117.3 & 0.48 &  &  \\
 & & 4 & 0.99 & 106.1$^*$ & 0.49 & 106.6$^*$ & 0.48 & 105.5 & 0.30 \\
 & & & 0.9 & 105.8$^*$ & 0.43 & 105.9$^*$ & 0.37 &  &  \\
 & & & 0.8 & 105.6$^*$ & 0.36 & 105.6$^*$ & 0.34 &  &  \\
 & & & 0.7 & 105.5$^*$ & 0.31 & 105.5$^*$ & 0.33 &  &  \\
 & & 5 & 0.99 & 96.0$^*$ & 0.49 & 99.1 & 0.48 & 95.3 & 0.30 \\
 & & & 0.9 & 95.6$^*$ & 0.43 & 97.9 & 0.37 &  &  \\
 & & & 0.8 & 95.4$^*$ & 0.36 & 96.8$^*$ & 0.36 &  &  \\
 & & & 0.7 & 95.3$^*$ & 0.31 & 96.4 & 0.33 &  &  \\
\hline
\multirow{10}{*}[-2pt]{AS2} & \multirow{10}{*}[-2pt]{3000} & 2 & 0.99 & 220.8$^*$ & 0.50 & 222.1$^*$ & 0.50 & 220.4 & 0.43 \\
 & & & 0.9 & 220.4$^*$ & 0.43 & 221.1$^*$ & 0.46 &  &  \\
 & & 3 & 0.99 & 179.1 & 0.50 & 181.6$^*$ & 0.50 & 178.9 & 0.46 \\
 & & & 0.9 & 178.9 & 0.46 & 179.4$^*$ & 0.50 &  &  \\
 & & 4 & 0.99 & 162.7$^*$ & 0.50 & 163.6$^*$ & 0.49 & 161.7 & 0.32 \\
 & & & 0.9 & 162.1 & 0.43 & 162.4$^*$ & 0.39 &  &  \\
 & & & 0.8 & 161.8$^*$ & 0.37 & 161.9$^*$ & 0.35 &  &  \\
 & & 5 & 0.99 & 147.9 & 0.50 & 152.6$^*$ & 0.48 & 146.4 & 0.32 \\
 & & & 0.9 & 146.8 & 0.43 & 151.0$^*$ & 0.40 &  &  \\
 & & & 0.8 & 146.5$^*$ & 0.37 & 149.4 & 0.35 &  &  \\
\end{longtable}
$^*$ Instances for which our algorithm finds a better solution than the initial fair partition matrix found by fair Lloyd's algorithm in the initialization step.\par
\endgroup

We now discuss the computational cost of the fair Lloyd's algorithm outlined in Algorithm~\eqref{alg:fairLloyd}. We report the cost of this greedy algorithm for the three largest data sets considered in this paper, namely, {\tt Credit}  with $n=1000$, {\tt AS1}  with $n=2000$, and {\tt AS2}  with $n=3000$.  For each combination of $(\rho, K)$,  we run the fair Lloyd's algorithm using five random initializations of the centroids. 
We report the mean and standard deviation of the run time and the number of iterations required for convergence. The results are summarized in Table~\ref{table:fairLloydTiming}.
Overall, the fair Lloyd's algorithm is efficient. Interestingly, the choice between $\alpha$-fair and $\tau$-fair constraints does not noticeably affect the overall computational cost of this algorithm. This behavior is somewhat unexpected because in the case of 
$\tau$-fair constraints, by Proposition~\ref{prop:taufairLP},  to solve Problem~\eqref{fairAssignII}, it suffices to solve an LP, while 
in the case of $\alpha$-fair constraints, by Proposition~\ref{prop:alfafairNPhard}, to solve Problem~\eqref{fairAssignI}, we must solve an NP-hard integer programming problem. In our experiments, we observed that {\tt Gurobi}'s cutting-plane procedure is very effective at closing the optimality gap of Problem~\eqref{fairAssignI} quickly.


\begin{table}[htbp]
\footnotesize
\centering
\caption{The fair Lloyd's algorithm run time and iteration count on larger data sets. For each combination of $\rho,K$, we report the mean with the standard deviation in parentheses across five random trials.}
\label{table:fairLloydTiming}
\begin{tabular}{|lcc|cc|cc|}
\hline
\multirow{2}{*}{Data set} & \multirow{2}{*}{$K$} & \multirow{2}{*}{$\rho$} & \multicolumn{2}{c|}{$\alpha$-fair} & \multicolumn{2}{c|}{$\tau$-fair} \\
 & & & Time (s) & Iterations & Time (s) & Iterations \\
\hline
Credit & 2 & 0.99 & $3.1\,(0.4)$ & $6.6\,(2.1)$ & $2.6\,(0.0)$ & $9.4\,(3.4)$ \\
&   & 0.90 & $2.8\,(0.1)$ & $7.2\,(1.3)$ & $2.6\,(0.1)$ & $8.2\,(2.3)$ \\
& 3 & 0.99 & $3.5\,(0.4)$ & $18.0\,(8.5)$ & $2.6\,(0.0)$ & $17.6\,(5.8)$ \\
&   & 0.90 & $3.3\,(0.3)$ & $17.4\,(6.9)$ & $2.7\,(0.1)$ & $19.8\,(5.2)$ \\
& 4 & 0.99 & $3.5\,(0.5)$ & $12.2\,(6.9)$ & $2.5\,(0.0)$ & $9.0\,(3.4)$ \\
&   & 0.90 & $3.3\,(0.5)$ & $12.4\,(9.7)$ & $2.5\,(0.1)$ & $14.0\,(7.4)$ \\
& 5 & 0.99 & $4.3\,(0.7)$ & $18.4\,(8.4)$ & $3.0\,(0.5)$ & $19.0\,(8.6)$ \\
&   & 0.90 & $6.9\,(1.2)$ & $22.6\,(7.7)$ & $3.8\,(0.6)$ & $20.8\,(3.9)$ \\
\hline
AS1 & 2 & 0.99 & $11.7\,(0.6)$ & $8.2\,(1.9)$ & $11.7\,(1.7)$ & $8.8\,(1.6)$ \\
&   & 0.90 & $14.2\,(0.3)$ & $10.6\,(3.4)$ & $14.7\,(2.0)$ & $11.6\,(1.7)$ \\
& 3 & 0.99 & $14.0\,(0.2)$ & $9.0\,(3.4)$ & $14.6\,(1.6)$ & $15.8\,(3.5)$ \\
&   & 0.90 & $15.8\,(1.2)$ & $8.6\,(3.4)$ & $13.4\,(1.2)$ & $13.4\,(2.7)$ \\
& 4 & 0.99 & $15.1\,(1.3)$ & $20.2\,(7.2)$ & $12.1\,(0.3)$ & $22.6\,(14.3)$ \\
&   & 0.90 & $13.7\,(1.0)$ & $16.6\,(7.2)$ & $13.1\,(1.6)$ & $25.8\,(16.6)$ \\
& 5 & 0.99 & $16.1\,(1.9)$ & $18.2\,(9.1)$ & $15.6\,(4.6)$ & $17.6\,(5.0)$ \\
&   & 0.90 & $16.4\,(0.8)$ & $16.0\,(4.8)$ & $13.7\,(0.5)$ & $20.6\,(7.1)$ \\
\hline
AS2 & 2 & 0.99 & $35.3\,(0.7)$ & $9.0\,(4.5)$ & $35.0\,(0.6)$ & $15.0\,(4.3)$ \\
&   & 0.90 & $35.2\,(0.9)$ & $8.4\,(2.5)$ & $34.5\,(0.4)$ & $14.2\,(4.4)$ \\
& 3 & 0.99 & $35.4\,(1.1)$ & $10.0\,(3.0)$ & $33.6\,(1.1)$ & $14.8\,(5.9)$ \\
&   & 0.90 & $34.9\,(1.5)$ & $10.4\,(2.9)$ & $30.7\,(0.4)$ & $12.6\,(2.6)$ \\
& 4 & 0.99 & $33.8\,(0.9)$ & $14.6\,(4.0)$ & $31.1\,(0.6)$ & $21.0\,(11.1)$ \\
&   & 0.90 & $34.0\,(1.3)$ & $15.0\,(3.2)$ & $31.2\,(0.5)$ & $23.2\,(14.9)$ \\
& 5 & 0.99 & $40.9\,(3.9)$ & $24.6\,(12.0)$ & $31.5\,(1.9)$ & $28.4\,(14.3)$ \\
&   & 0.90 & $37.5\,(6.3)$ & $18.8\,(6.5)$ & $45.5\,(4.5)$ & $25.2\,(14.6)$ \\
\hline
\end{tabular}
\end{table}

Finally, to illustrate the importance of controlling the sparsity of the inequalities added to the LP relaxation by Algorithm~\ref{alg:sep}, selected relative gap trajectories are depicted in Figures~\ref{fig:alpha_fair_math}--\ref{fig:tau_fair_selected} in the Appendix.

\section{Spectral clustering}
\label{sec:spectral_clustering}
In this section, we extend the cutting-plane algorithm of Section~\ref{sec:cutting_plane} to solve the minimum ratio-cut, and hence the spectral clustering problem. As we described in Section~\ref{sec:intro}, spectral clustering is a highly popular two-step heuristic algorithm that aims to solve the  minimum ratio-cut problem defined by~\eqref{minratio}. The outline of this technique is given in Algorithm~\ref{alg:spectral_init}. For further clarity, in the remainder of this paper we refer to this algorithm as the \emph{spectral heuristic}.

\begin{algorithm}[htbp]
    \caption{Spectral heuristic}
    \label{alg:spectral_init}
    \KwIn{Graph Laplacian $L$, number of clusters $K$}
    \KwOut{Cluster assignments for all $n$ points}
    Compute the eigenvectors $u_k$, $k \in [K]$ of $L$ corresponding to its $K$ smallest eigenvalues\;
    Let $U = [u_1, u_2, \cdots, u_K]\in \mathbb{R}^{n \times K}$\;
    Perform K-means clustering on the rows of $U$  using Lloyd's algorithm to obtain the clusters
\end{algorithm}

In~\cite{LinStr20} the authors propose an SDP relaxation for 
Problem~\eqref{minratio} and obtain recovery guarantees for this relaxation under different generative models.
Since the feasible region of Problem~\eqref{minratio} is identical to that of the K-means clustering problem~\eqref{Kmeans2}, replacing $d_{ij}$ by the graph Laplacian entries $L_{ij}$ in Problem~\eqref{lp:pK}, we obtain an LP relaxation for Problem~\eqref{minratio}. 
 In this section, we propose an extension of the cutting-plane algorithm of Section~\ref{sec:cutting_plane} to solve 
Problem~\eqref{minratio}. Subsequently, we demonstrate the effectiveness of our algorithm by performing social network analysis on real-world data sets.

\subsection{The cutting-plane algorithm for spectral clustering}
\label{subsec:spectral_alg}

To extend the cutting-plane algorithm of Section~\ref{sec:cutting_plane} to solve Problem~\eqref{minratio}, we need to specify the initialization and rounding steps. We use the popular spectral heuristic outlined in Algorithm~\ref{alg:spectral_init} to find a good feasible solution and hence an upper bound for Problem~\eqref{minratio}. As we described in Section~\ref{sec:cutting_plane}, this feasible solution is further used for selecting the set of active inequalities~\eqref{eq3k} to construct the first LP in Algorithm~\ref{alg:Cutting}. For the rounding step, we follow a similar strategy to the rounding scheme of K-means clustering described in Algorithm~\ref{alg:rounding}.  The outline of this technique is given in Algorithm~\ref{alg:spectral_round}.

\begin{algorithm}[htb]
    \caption{Rounding scheme for spectral clustering}
    \label{alg:spectral_round}
    \KwIn{Data points $\{x^i\}_{i=1}^n$, number of clusters $K$, LP solution $X_{lb}$}
    \KwOut{Partition matrix $X_{ub}$}
    Compute the eigenvectors $v_i$, $i \in [K]$, of $X_{lb}$ corresponding to its $K$ largest eigenvalues\;
    Construct the embedding matrix $U=[v_1, v_2, \cdots, v_K] \in \mathbb{R}^{n \times K}$\;
    Apply K-means clustering to the rows of $U$ using Lloyd's algorithm to obtain the partition matrix $X_{ub}$.
\end{algorithm}


\subsection{Community detection}
Community detection is one of the most prominent applications of spectral clustering~\cite{Lux07}. Given a network, the goal is to partition its nodes into groups such that nodes within the same community are densely connected, while connections between different communities are sparse. This problem has attracted sustained attention across multiple disciplines, including sociology, biology, computer science, and operations research, and arises in contexts ranging from the study of friendship networks and collaboration graphs to the analysis of protein--protein interaction networks and citation graphs~\cite{Abbe18}.
Community detection can be approached through several graph partitioning formulations. Among the most studied formulations are the minimum bisection,  minimum normalized cut, and minimum ratio-cut problems~\cite{WagWag93}. Theoretical limits of SDPs for community detection have been extensively studied in the literature~\cite{Abbe18}, while the theoretical limits of LPs for community detection were recently investigated in~\cite{dPIdaTim20}.

We now investigate the effectiveness of our cutting-plane algorithm for community detection using real-world data sets. The networks are modeled as weighted or unweighted graphs for which the graph Laplacian can then be computed using equation~\eqref{Laplacian}. 
The graph data are obtained from the following two sources:
\begin{itemize}
    \item University of Michigan network data: \url{https://websites.umich.edu/~mejn/netdata/}
    \item  Stanford Large Network Dataset Collection: 
\url{https://snap.stanford.edu/data/index.html}. 
\end{itemize} 
For the seven smaller networks, namely, {\tt polbooks}, {\tt adjnoun}, {\tt football}, {\tt facebook}, {\tt friendship}, {\tt netscience}, and {\tt deezer ego}, we make use of the entire data set. For the larger networks, namely,  {\tt ca-GrQc}, {\tt ca-HepTh}, and {\tt lastfm\_asia} whose sizes are currently beyond the reach of the proposed algorithm, we generate random samples of size $n \in \{500,1000, 1500\}$. For each data set, we let $K \in \{2,\cdots, 10\}$. The graph of the {\tt friendship} network consists of three connected components. Therefore, the optimal solution for $K=2,3$ is trivial with an objective value of zero. Hence, in this case, we report the results for $K \geq 4$.

Our cutting-plane algorithm is initialized with $p_{\rm init} = 10^7$; all other parameters are set to the same value as in the fair clustering experiments of Section~\ref{sec:fair_clustering}. In these experiments, we selected a higher value for $p_{\rm init}$  because the LP relaxation does not contain additional fairness constraints, and including a larger number of initial cuts often accelerates convergence. As before, all experiments are conducted on {\tt Google Colab} using an Intel(R) Xeon(R) CPU @ 2.20GHz with 8 cores and 50~GB of RAM; the GPU is an NVIDIA G4 with 95.6~GB of RAM. We use {\tt cuPDLPx}~\cite{lu2025cupdlpx} to solve all LP relaxations. 

In total, $142$ instances were tested. Performance profiles are shown in Figure~\ref{fig:spectral_performance_profile}. As can be seen from this figure, 
within the three hour time limit, nearly $60\%$ of instances reach an optimality gap below $0.01\%$, nearly $85\%$ of instances reach an optimality gap below $0.1\%$, and nearly $94\%$ of instances reach an optimality gap below $1\%$. In addition, more than $80\%$ of the instances reach an optimality gap of less than $1.0\%$ in about an hour.

\begin{figure}[htp]
    \centering
    \includegraphics[width=0.5\textwidth]{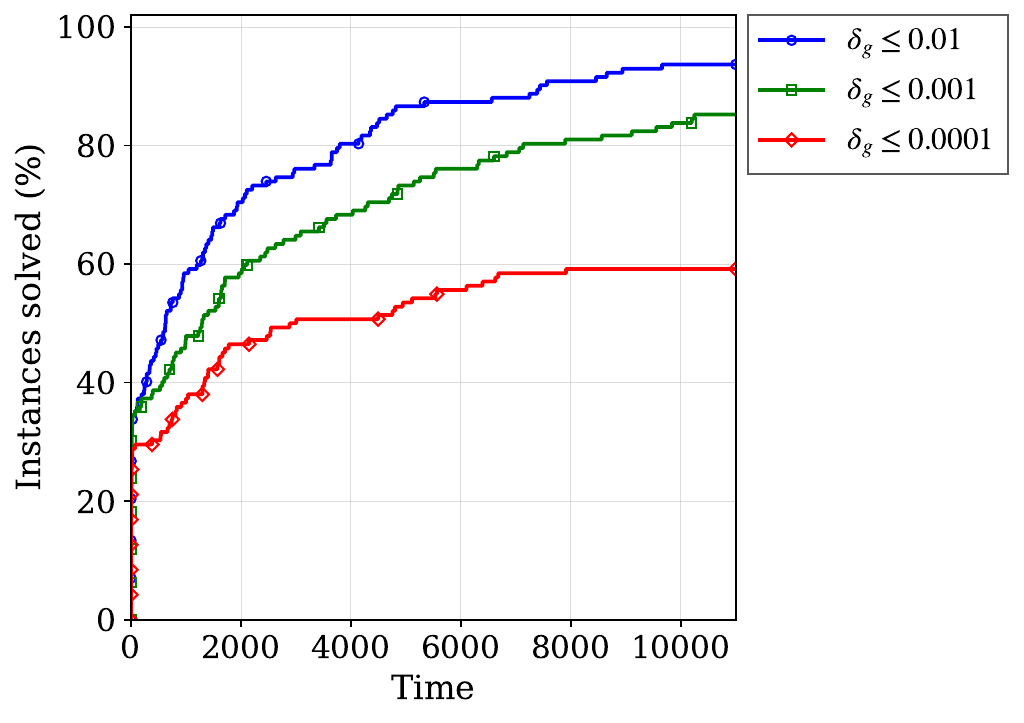}
    \caption{Performance profile for 124 instances for spectral clustering. The curves show the percentage of instances whose relative optimality gap $\delta_g$ is at most $10^{-2}$, $10^{-3}$, and $10^{-4}$ within a given time limit.}
    \label{fig:spectral_performance_profile}
\end{figure}

The detailed results are given in Table~\ref{table:social_network}. For each instance, we report the run time, the maximum separation parameter $t_{\max}$, and the objective value of the solution returned by the cutting-plane algorithm upon termination. For comparison, we also report the objective value of the solution found by the spectral heuristic outlined in Algorithm~\ref{alg:spectral_init}. For the six small networks, the proposed algorithm finds solutions with a relative optimality gap less than $0.01\%$  fairly quickly.
These experiments demonstrate the remarkable strength of the proposed LP relaxation for the minimum-ratio cut problem. 
In fact, the algorithm is efficient because the LP often closes the optimality gap without the need for any dense cutting planes. More specifically,
the separation parameter $t_{\max}$ rarely exceeds $t = 3$ for small networks, and in particular, for the {\tt friendship} network it remains at $t = 2$ throughout.
 Interestingly, for $77\%$ of the instances, the proposed algorithm finds a better solution than the spectral heuristic. More specifically, for $38\%$ of the instances, the proposed algorithm produces a solution whose objective value improves upon that of the spectral heuristic by at least $10\%$.
 This indicates that even though the spectral heuristic is highly efficient, it may produce solutions that are considerably suboptimal. In contrast, the proposed algorithm terminates with a certified optimality gap.

\begingroup
\footnotesize
\setlength{\tabcolsep}{3pt}
\begin{longtable}{|lccccccc|}
\caption{Results on real-world social network data sets. Time is reported in seconds, $t_{\max}$ is the largest separation parameter reached in Algorithm~\ref{alg:Cutting}, and $\delta_g$ is the final relative optimality gap. Best solution is the final solution returned by the proposed algorithm, while the Spectral solution is the solution returned by the spectral heuristic. }
\label{table:social_network}\\
\hline
Dataset & $n$ & $K$ & $t_{\max}$ & Time & $\delta_g$ & Best solution & Spectral solution \\ \hline
\endfirsthead
\caption[]{(continued)}\\
\hline
Dataset & $n$ & $K$ & $t_{\max}$ & Time & $\delta_g$ & Best solution & Spectral solution \\ \hline
\endhead
 \multirow{9}{*}[-2pt]{polbooks} & \multirow{9}{*}[-2pt]{105} & 2 & 2 & 1.8 & $6.3\times 10^{-6}$ & 0.72 & 0.76 \\
 &  & 3 & 3 & 2.6 & $6.2\times 10^{-5}$ & 2.30 & 3.01 \\
 &  & 4 & 4 & 2.0 & $2.1\times 10^{-5}$ & 4.46 & 4.64 \\
 &  & 5 & 4 & 3.9 & $1.3\times 10^{-6}$ & 6.71 & 7.46 \\
 &  & 6 & 3 & 2.5 & $5.0\times 10^{-6}$ & 9.26 & 9.30 \\
 &  & 7 & 7 & 18.8 & $1.0\times 10^{-8}$ & 12.02 & 12.32 \\
 &  & 8 & 4 & 3.1 & $9.5\times 10^{-7}$ & 14.94 & 15.24 \\
 &  & 9 & 3 & 3.4 & $5.4\times 10^{-7}$ & 18.06 & 18.90 \\
 &  & 10 & 3 & 3.3 & $2.4 \times 10^{-7}$ & 21.18 & 23.05 \\ \hline
 \multirow{9}{*}[-2pt]{football} & \multirow{9}{*}[-2pt]{115} & 2 & 2 & 1.2 & $2.7\times 10^{-6}$ & 2.12 & 2.60 \\
 &  & 3 & 3 & 1.4 & $9.7\times 10^{-5}$ & 4.81 & 5.02 \\
 &  & 4 & 3 & 2.1 & $1.8\times 10^{-5}$ & 8.03 & 8.81 \\
 &  & 5 & 3 & 3.2 & $9.5\times 10^{-5}$ & 11.41 & 12.23 \\
 &  & 6 & 3 & 2.6 & $7.0\times 10^{-6}$ & 15.06 & 16.70 \\
 &  & 7 & 3 & 5.3 & $1.8\times 10^{-6}$ & 18.83 & 19.26 \\
 &  & 8 & 3 & 2.5 & $5.0\times 10^{-5}$ & 22.60 & 23.16 \\
 &  & 9 & 3 & 1.3 & $8.8\times 10^{-5}$ & 26.93 & 27.56 \\
 &  & 10 & 3 & 1.5 & $3.0\times 10^{-8}$ & 31.42 & 32.01 \\ \hline
 \multirow{9}{*}[-2pt]{adjnoun} & \multirow{9}{*}[-2pt]{112} & 2 & 2 & 0.4 & $4.7\times 10^{-4}$ & 1.01 & 1.01 \\
 &  & 3 & 3 & 1.3 & $8.3\times 10^{-5}$ & 2.02 & 2.02 \\
 &  & 4 & 4 & 4.2 & $1.6\times 10^{-4}$ & 3.03 & 3.03 \\
 &  & 5 & 5 & 20.8 & $1.7\times 10^{-4}$ & 4.04 & 4.04 \\
 &  & 6 & 6 & 13.9 & $9.9\times 10^{-5}$ & 5.05 & 5.05 \\
 &  & 7 & 7 & 13.5 & $1.3\times 10^{-4}$ & 6.06 & 6.06 \\
 &  & 8 & 2 & 1.8 & $8.8\times 10^{-5}$ & 7.07 & 7.07 \\
 &  & 9 & 2 & 1.6 & $2.6\times 10^{-5}$ & 8.08 & 8.09 \\
 &  & 10 & 2 & 0.8 & $5.2\times 10^{-5}$ & 9.09 & 9.09 \\ \hline
 \multirow{6}{*}[-2pt]{facebook} & \multirow{6}{*}[-2pt]{155} & 2 & 2 & 1.1 & $3.8\times 10^{-4}$ & 1.01 & 1.01 \\
 &  & 3 & 2 & 1.3 & $7.1\times 10^{-5}$ & 2.68 & 2.68 \\
 &  & 4 & 4 & 1.3 & $1.3\times 10^{-4}$ & 5.27 & 5.28 \\
 &  & 5 & 2 & 4.2 & $8.3\times 10^{-5}$ & 8.32 & 8.35 \\
 &  & 6 & 3 & 6.1 & $6.8\times 10^{-5}$ & 11.37 & 11.47 \\
 &  & 7 & 4 & 7.6 & $1.5\times 10^{-6}$ & 14.45 & 14.52 \\\hline
  \multirow{3}{*}[-2pt]{facebook}& \multirow{3}{*}[-2pt]{155} & 8 & 4 & 5.1 & $7.0\times 10^{-7}$ & 17.53 & 17.57 \\
 &  & 9 & 4 & 10.1 & $5.9\times 10^{-6}$ & 21.02 & 21.07 \\
 &  & 10 & 3 & 5.8 & $9.3\times 10^{-6}$ & 25.03 & 25.11 \\ \hline
 \multirow{7}{*}[-2pt]{friendship} & \multirow{7}{*}[-2pt]{133} & 4 & 2 & 1.1 & $2.9\times 10^{-6}$ & 0.26 & 0.26 \\
 &  & 5 & 5 & 3.1 & $2.2\times 10^{-4}$ & 0.64 & 0.76 \\
 &  & 6 & 6 & 11.1 & $4.5\times 10^{-4}$ & 1.16 & 1.40 \\
 &  & 7 & 3 & 1.9 & $4.9\times 10^{-7}$ & 1.74 & 1.93 \\
 &  & 8 & 3 & 1.3 & $1.6\times 10^{-5}$ & 2.33 & 2.86 \\
 &  & 9 & 3 & 5.3 & $9.4\times 10^{-5}$ & 3.13 & 4.28 \\
 &  & 10 & 3 & 3.8 & $7.2\times 10^{-6}$ & 4.17 & 5.37 \\ \hline
 \multirow{9}{*}[-2pt]{deezer\_ego} & \multirow{9}{*}[-2pt]{363} & 2 & 2 & 163.5 & $6.3\times 10^{-4}$ & 1.00 & 1.00 \\
 &  & 3 & 3 & 241.8 & $2.8\times 10^{-4}$ & 2.01 & 2.01 \\
 &  & 4 & 4 & 1036.6 & $9.6\times 10^{-5}$ & 3.01 & 3.01 \\
 &  & 5 & 5 & 1406.0 & $2.1\times 10^{-5}$ & 5.01 & 5.01 \\
 &  & 6 & 4 & 1332.4 & $9.6\times 10^{-5}$ & 7.02 & 7.02 \\
 &  & 7 & 7 & 2289.2 & $2.1\times 10^{-4}$ & 9.03 & 9.03 \\
 &  & 8 & 5 & 538.2 & $3.6\times 10^{-5}$ & 11.03 & 11.03 \\
 &  & 9 & 5 & 809.2 & $2.7\times 10^{-5}$ & 14.04 & 14.04 \\
 &  & 10 & 3 & 664.3 & $2.9\times 10^{-5}$ & 17.05 & 17.05 \\ \hline
 \multirow{9}{*}[-2pt]{netscience} & \multirow{9}{*}[-2pt]{379} & 2 & 2 & 159.0 & $3.7\times 10^{-3}$ & 0.04 & 0.08 \\*
 &  & 3 & 3 & 74.3 & $3.7\times 10^{-5}$ & 0.13 & 0.44 \\*
 &  & 4 & 2 & 12.5 & $1.3\times 10^{-5}$ & 0.23 & 0.25 \\*
 &  & 5 & 2 & 10.0 & $7.2\times 10^{-5}$ & 0.38 & 0.38 \\*
 &  & 6 & 6 & 192.7 & $1.5\times 10^{-4}$ & 0.54 & 0.62 \\*
 &  & 7 & 3 & 26.8 & $1.5\times 10^{-5}$ & 0.74 & 0.91 \\*
 &  & 8 & 2 & 16.4 & $5.3\times 10^{-6}$ & 0.97 & 1.03 \\*
 &  & 9 & 3 & 25.8 & $9.8\times 10^{-5}$ & 1.22 & 1.43 \\*
 &  & 10 & 2 & 21.4 & $2.6\times 10^{-6}$ & 1.49 & 1.88 \\ \hline
 \multirow{9}{*}[-2pt]{ca-GrQc\_500} & \multirow{9}{*}[-2pt]{500} & 2 & 2 & 383.6 & $2.2\times 10^{-5}$ & 0.25 & 0.43 \\
 &  & 3 & 3 & 3526.0 & $3.6\times 10^{-4}$ & 0.57 & 0.79 \\
 &  & 4 & 4 & 1300.8 & $7.2\times 10^{-5}$ & 0.95 & 1.05 \\
 &  & 5 & 3 & 740.6 & $5.5\times 10^{-7}$ & 1.45 & 1.49 \\
 &  & 6 & 3 & 545.7 & $3.1\times 10^{-5}$ & 2.02 & 2.34 \\
 &  & 7 & 3 & 919.0 & $7.4\times 10^{-5}$ & 2.63 & 2.77 \\
 &  & 8 & 3 & 699.9 & $2.1\times 10^{-5}$ & 3.28 & 3.39 \\
 &  & 9 & 3 & 755.3 & $5.2\times 10^{-5}$ & 4.04 & 4.29 \\
 &  & 10 & 3 & 1348.8 & $2.7\times 10^{-5}$ & 4.85 & 5.16 \\ \hline
 \multirow{9}{*}[-2pt]{ca-HepTh\_500} & \multirow{9}{*}[-2pt]{500} & 2 & 2 & 1001.4 & $5.2\times 10^{-5}$ & 0.38 & 0.60 \\
 &  & 3 & 3 & 2360.9 & $1.4\times 10^{-4}$ & 0.86 & 1.12 \\
 &  & 4 & 4 & 2542.0 & $2.3\times 10^{-5}$ & 1.36 & 1.52 \\
 &  & 5 & 4 & 2532.5 & $6.3\times 10^{-5}$ & 1.89 & 2.18 \\
 &  & 6 & 3 & 1609.0 & $6.5\times 10^{-5}$ & 2.50 & 2.61 \\
 &  & 7 & 3 & 1573.0 & $1.3\times 10^{-5}$ & 3.15 & 3.34 \\
 &  & 8 & 8 & 2244.5 & $1.4\times 10^{-3}$ & 3.83 & 4.24 \\
 &  & 9 & 3 & 1400.1 & $3.8\times 10^{-5}$ & 4.51 & 4.74 \\
 &  & 10 & 3 & 832.5 & $9.5\times 10^{-5}$ & 5.22 & 5.68 \\ \hline
 \multirow{7}{*}[-2pt]{lastfm\_asia\_500} & \multirow{7}{*}[-2pt]{500} & 2 & 2 & 3006.3 & $6.7\times 10^{-7}$ & 0.48 & 0.48 \\
 &  & 3 & 3 & 6841.4 & $2.8\times 10^{-4}$ & 0.99 & 0.99 \\
 &  & 4 & 4 & 6626.2 & $3.4\times 10^{-5}$ & 1.49 & 1.49 \\
 &  & 5 & 5 & 686.6 & $1.2\times 10^{-4}$ & 1.99 & 1.99 \\
 &  & 6 & 6 & 6432.2 & $9.0\times 10^{-4}$ & 2.99 & 3.21 \\
 &  & 7 & 7 & $>10800$ & $1.4\times 10^{-3}$ & 4.00 & 4.22 \\
 &  & 8 & 8 & $>10800$ & $4.0\times 10^{-4}$ & 5.00 & 6.45 \\\hline
  \multirow{2}{*}[-2pt]{lastfm\_asia\_500} & \multirow{2}{*}[-2pt]{500} & 9 & 9 & $>10800$ & $1.7\times 10^{-3}$ & 6.00 & 7.93 \\
 &  & 10 & 10 & $>10800$ & $1.7\times 10^{-3}$ & 7.09 & 8.92 \\ \hline
 \multirow{9}{*}[-2pt]{ca-GrQc\_1000} & \multirow{9}{*}[-2pt]{1000} & 2 & 2 & 1302.5 & $3.4\times 10^{-5}$ & 0.17 & 0.17 \\
 &  & 3 & 3 & $>10800$ & $4.7\times 10^{-3}$ & 0.38 & 0.38 \\
 &  & 4 & 4 & 4805.1 & $9.5\times 10^{-5}$ & 0.72 & 0.76 \\
 &  & 5 & 5 & 3891.5 & $1.7\times 10^{-4}$ & 1.08 & 1.33 \\
 &  & 6 & 4 & 1606.1 & $6.9\times 10^{-5}$ & 1.48 & 1.67 \\
 &  & 7 & 7 & 1971.3 & $1.7\times 10^{-4}$ & 1.91 & 2.00 \\
 &  & 8 & 2 & 1709.0 & $4.9\times 10^{-5}$ & 2.41 & 2.92 \\
 &  & 9 & 9 & 4696.2 & $1.5\times 10^{-4}$ & 2.91 & 3.66 \\
 &  & 10 & 3 & 1665.7 & $3.4\times 10^{-5}$ & 3.52 & 4.59 \\ \hline
 \multirow{9}{*}[-2pt]{ca-HepTh\_1000} & \multirow{9}{*}[-2pt]{1000} & 2 & 2 & 1781.5 & $8.1\times 10^{-5}$ & 0.06 & 0.06 \\
 &  & 3 & 3 & $>10800$ & $7.2\times 10^{-3}$ & 0.15 & 0.15 \\
 &  & 4 & 4 & $>10800$ & $1.3\times 10^{-2}$ & 0.32 & 0.53 \\
 &  & 5 & 5 & 4034.6 & $2.6\times 10^{-4}$ & 0.51 & 0.70 \\
 &  & 6 & 6 & 2942.1 & $4.9\times 10^{-4}$ & 0.76 & 0.95 \\
 &  & 7 & 5 & 2149.1 & $9.3\times 10^{-5}$ & 1.01 & 1.16 \\
 &  & 8 & 8 & 1732.2 & $1.2\times 10^{-4}$ & 1.31 & 1.58 \\
 &  & 9 & 9 & 7816.4 & $3.2\times 10^{-4}$ & 1.65 & 1.84 \\
 &  & 10 & 10 & $>10800$ & $1.0\times 10^{-4}$ & 1.98 & 2.15 \\ \hline
 \multirow{9}{*}[-2pt]{lastfm\_asia\_1000} & \multirow{9}{*}[-2pt]{1000} & 2 & 2 & 1444.1 & $2.8\times 10^{-3}$ & 0.11 & 0.11 \\
 &  & 3 & 3 & 9616.0 & $1.0\times 10^{-3}$ & 0.61 & 0.61 \\
 &  & 4 & 4 & 7326.0 & $1.6\times 10^{-4}$ & 1.28 & 1.28 \\
 &  & 5 & 5 & $>10800$ & $2.9\times 10^{-3}$ & 2.28 & 2.69 \\
 &  & 6 & 6 & $>10800$ & $1.6\times 10^{-4}$ & 3.29 & 3.75 \\
 &  & 7 & 7 & 7913.2 & $4.5\times 10^{-5}$ & 4.29 & 4.74 \\
 &  & 8 & 7 & 6395.4 & $4.9\times 10^{-5}$ & 5.29 & 5.76 \\
 &  & 9 & 6 & 6105.2 & $3.6\times 10^{-5}$ & 6.29 & 7.11 \\
 &  & 10 & 7 & 4496.5 & $8.4\times 10^{-5}$ & 7.29 & 7.71 \\ \hline
 \multirow{9}{*}[-2pt]{ca-GrQc\_1500} & \multirow{9}{*}[-2pt]{1500} & 2 & 2 & 5376.3 & $2.7\times 10^{-4}$ & 0.25 & 0.30 \\
 &  & 3 & 3 & $>10800$ & $2.0\times 10^{-1}$ & 0.50 & 0.76 \\
 &  & 4 & 4 & $>10800$ & $3.0\times 10^{-2}$ & 0.75 & 1.02\\
 &  & 5 & 5 &$>10800$ & $1.2\times 10^{-2}$ & 1.00 & 1.32   \\
 &  & 6 & 6 & 9184.4 & $7.6\times 10^{-4}$ & 1.29 & 1.62 \\
 &  & 7 & 7 & 10285.3 & $7.8\times 10^{-4}$ & 1.62 & 1.87 \\
 &  & 8 & 5 & $>10800$ & $3.1\times 10^{-3}$ & 1.96 & 2.25 \\
 &  & 9 & 5 & $>10800$ & $9.0\times 10^{-4}$ & 2.29 & 2.81 \\
 &  & 10 & 10 & $>10800$ & $3.1\times 10^{-4}$ & 2.63 & 3.65 \\ \hline
 \multirow{9}{*}[-2pt]{ca-HepTh\_1500} & \multirow{9}{*}[-2pt]{1500} & 2 & 2 & 2885.0 & $2.8\times 10^{-5}$ & 0.39 & 0.39 \\
 &  & 3 & 3 & $>10800$ & $2.5\times 10^{-1}$ & 0.82 & 0.82 \\
 &  & 4 & 4 & $>10800$ & $6.8\times 10^{-2}$ & 1.25 & 1.25 \\
 &  & 5 & 5 & $>10800$ & $1.0\times 10^{-2}$ & 1.75 & 2.03 \\
 &  & 6 & 5 & 5564.9 & $5.6\times 10^{-5}$ & 2.32 & 2.47 \\
 &  & 7 & 5 & 5113.0 & $8.2\times 10^{-5}$ & 2.92 & 3.23 \\
 &  & 8 & 3 & 5535.2 & $7.0\times 10^{-5}$ & 3.55 & 3.98 \\
 &  & 9 & 4 & 4942.2 & $7.9\times 10^{-5}$ & 4.19 & 4.48 \\
 &  & 10 & 4 & 4759.4 & $9.9\times 10^{-5}$ & 4.88 & 5.33 \\ \hline
 \multirow{6}{*}[-2pt]{lastfm\_asia\_1500} & \multirow{6}{*}[-2pt]{1500} & 2 & 2 & 5563.6 &  $5.5\times 10^{-4}$ & 0.46 & 0.49\\
 &  & 3 & 3 & $>10800$ & $6.1\times 10^{-1}$ & 1.35 & 1.35 \\
 &  & 4 & 4 & $>10800$ & $1.0\times 10^{-2}$ & 1.83 & 2.17 \\
 &  & 5 & 5 & $>10800$ & $5.6\times 10^{-3}$ & 2.56 & 2.95 \\
 &  & 6 & 5 & $>10800$ & $6.1\times 10^{-3}$ & 3.53 & 3.62 \\
 &  & 7 & 6 & 6678.9 & $1.4\times 10^{-5}$ & 4.20 & 4.75 \\\hline
 \multirow{3}{*}[-2pt]{lastfm\_asia\_1500} & \multirow{3}{*}[-2pt]{1500} & 8 & 8 & 9539.17 & $2.0\times 10^{-4}$ & 5.19 & 5.81 \\
 &  & 9 & 8 & $>10800$ & $8.1\times 10^{-4}$ & 6.19 & 6.82 \\
 &  & 10 & 7 & $>10800$ & $8.1\times 10^{-4}$ & 7.19 & 7.81 \\ \hline
\end{longtable}
\endgroup


\begin{footnotesize} 
\bibliographystyle{plainurl}
\bibliography{ref,biblio}
\end{footnotesize}
\newpage
\appendix
\section*{Appendix}
\label{sec: Appendix}

In this appendix, we provide some details on the impact of parameter $t$ on the strength as well as the computational cost of Problem~\eqref{lp:pK}. Recall that Problem~\eqref{lp:pK} contains $\Theta(n^{t+1})$ inequalities of the form~\eqref{eq3k}. Therefore, a careful selection of $t$ is key to the efficiency of the cutting-plane algorithm. 

Figures~\ref{fig:alpha_fair_math}--\ref{fig:tau_fair_selected} depict the effect of increasing $t$ on reducing the relative optimality gap of the proposed cutting-plane algorithm. In most cases, the largest reduction in the optimality gap occurs when separating inequalities~\eqref{eq3k} with $t=2$. Increasing $t$ from $2$ to $3$ leads to an additional visible decrease in the gap for several instances; further increases continue to strengthen the relaxation, but with diminishing returns. We also observe that for some datasets, such as {\tt Student Math}, the improvement thanks to inequalities~\eqref{eq3k} with $t=3$ is more significant for larger $K$ (\ie $K = 4,5$), suggesting that inequalities with larger $t$ become increasingly valuable as the number of clusters grows.

\begin{figure}[htp]
    \centering
    \begin{minipage}{0.47\textwidth}
        \centering
        \includegraphics[width=\linewidth]{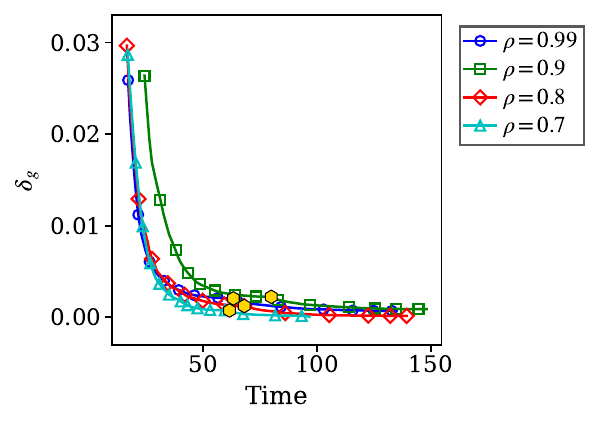}
        \subcaption{$K=3$}
    \end{minipage}
    \hfill
    \begin{minipage}{0.47\textwidth}
        \centering
        \includegraphics[width=\linewidth]{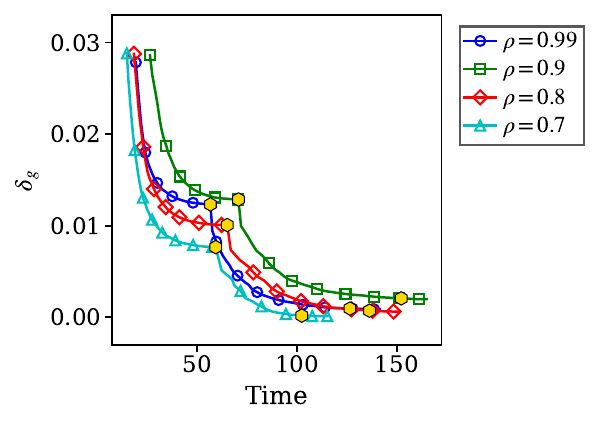}
        \subcaption{$K=4$}
    \end{minipage}
    \vspace{0.8em}

    \begin{minipage}{0.47\textwidth}
        \centering
        \includegraphics[width=\linewidth]{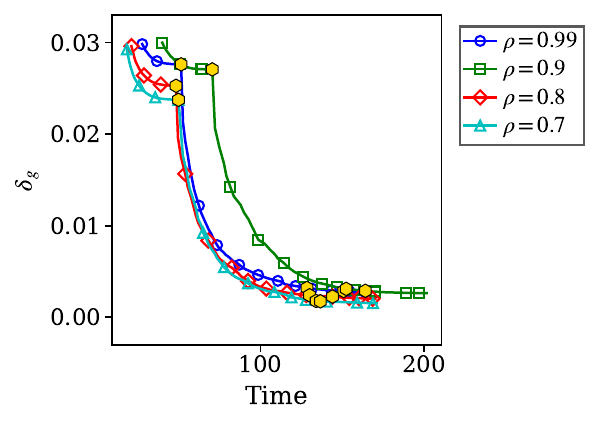}
        \subcaption{$K=5$}
    \end{minipage}
    \caption{Relative optimality gap $\delta_g$ over time for fair K-means clustering with $\alpha$-fair constraints for the {\tt Student Math} data set. Golden hexagons indicate the iterations in which the algorithm increases the separation parameter $t$.}
    \label{fig:alpha_fair_math}
\end{figure}

\begin{figure}[htp]
    \centering
    \begin{minipage}{0.47\textwidth}
        \centering
        \includegraphics[width=\linewidth]{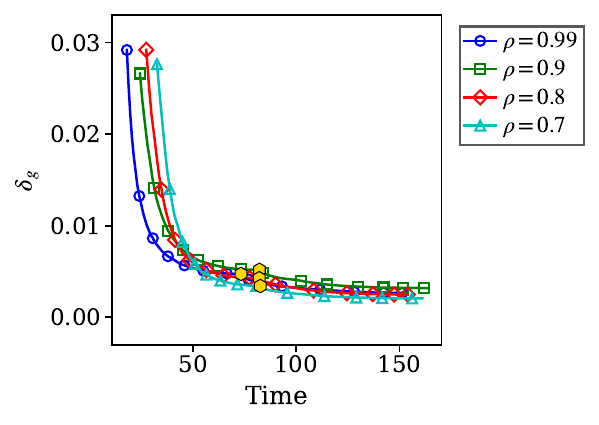}
        \subcaption{$K=3$}
    \end{minipage}
    \hfill
    \begin{minipage}{0.47\textwidth}
        \centering
        \includegraphics[width=\linewidth]{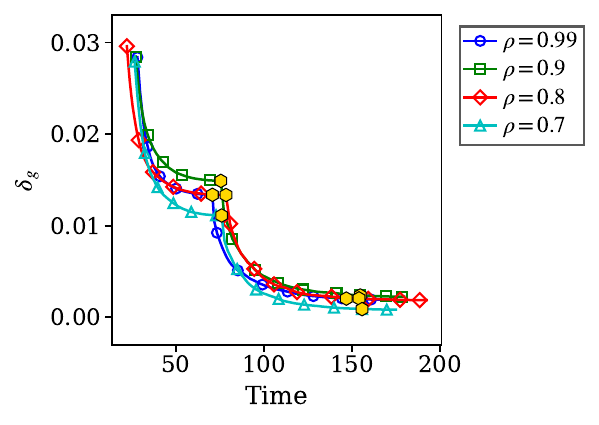}
        \subcaption{$K=4$}
    \end{minipage}
    \vspace{0.8em}

    \begin{minipage}{0.47\textwidth}
        \centering
        \includegraphics[width=\linewidth]{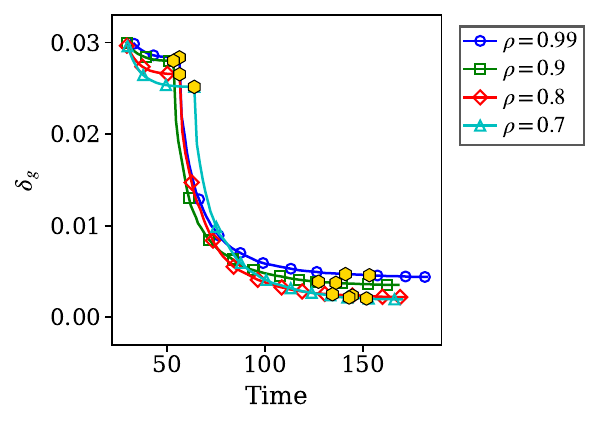}
        \subcaption{$K=5$}
    \end{minipage}
    \caption{Relative optimality gap $\delta_g$ over time for fair K-means clustering with $\tau$-fair constraints for the {\tt Student Math} data set. Golden hexagons indicate the iterations in which the algorithm increases the separation parameter $t$.}
    \label{fig:tau_fair_math}
\end{figure}

\begin{figure}[htp]
    \centering
    \begin{minipage}{0.47\textwidth}
        \centering
        \includegraphics[width=\linewidth]{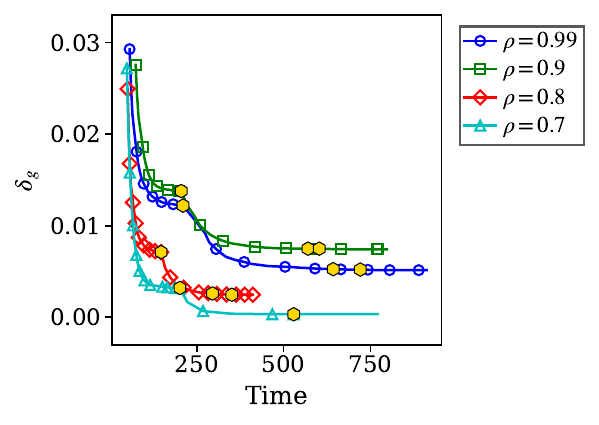}
        \subcaption{\texttt{Titanic 2}}
    \end{minipage}
    \hfill
    \begin{minipage}{0.47\textwidth}
        \centering
        \includegraphics[width=\linewidth]{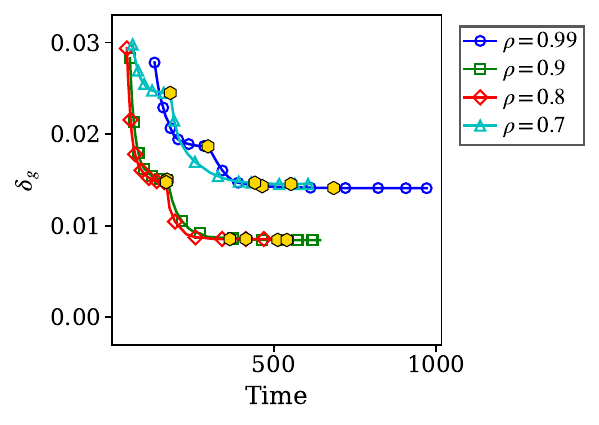}
        \subcaption{\texttt{Titanic 3}}
    \end{minipage}
    \vspace{0.8em}

    \begin{minipage}{0.47\textwidth}
        \centering
        \includegraphics[width=\linewidth]{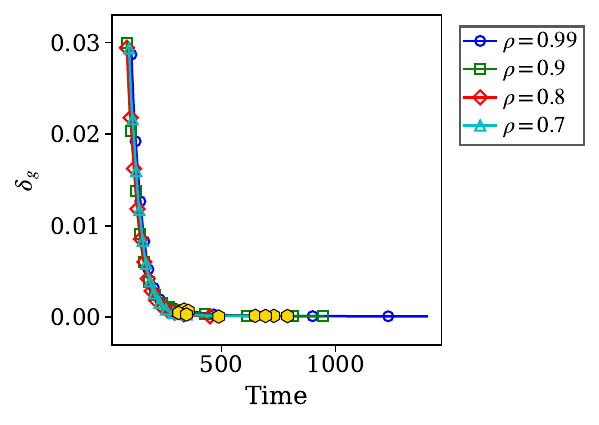}
        \subcaption{\texttt{Credit}}
    \end{minipage}
    \caption{Relative optimality gap $\delta_g$ over time for fair K-means clustering with $\alpha$-fair constraints and $K=5$ clusters for three data sets. Golden hexagons indicate iterations in which the algorithm increases the separation parameter $t$.}
    \label{fig:alpha_fair_selected}
\end{figure}

\begin{figure}[htp]
    \centering
    \begin{minipage}{0.47\textwidth}
        \centering
        \includegraphics[width=\linewidth]{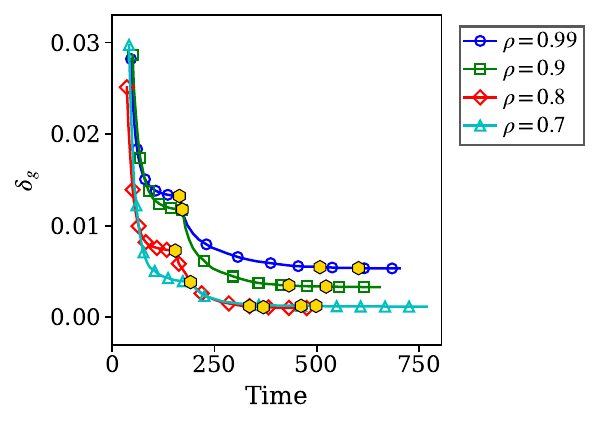}
        \subcaption{\texttt{Titanic 2}}
    \end{minipage}
    \hfill
    \begin{minipage}{0.47\textwidth}
        \centering
        \includegraphics[width=\linewidth]{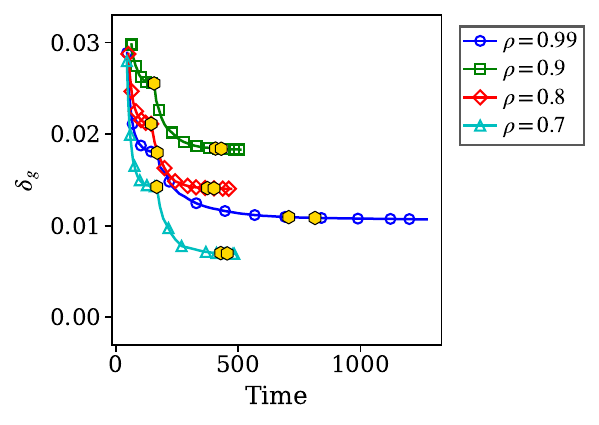}
        \subcaption{\texttt{Titanic 3}}
    \end{minipage}
    \vspace{0.8em}

    \begin{minipage}{0.47\textwidth}
        \centering
        \includegraphics[width=\linewidth]{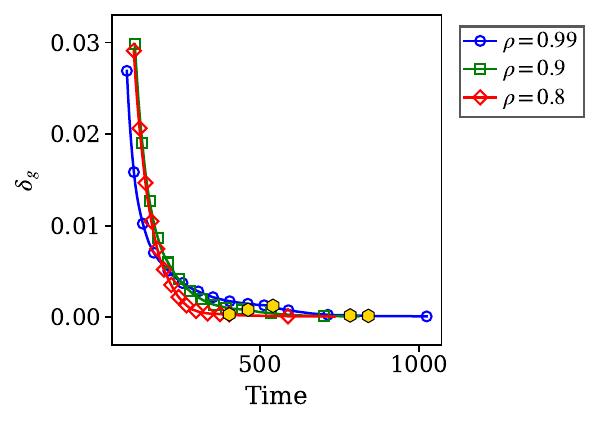}
        \subcaption{\texttt{Credit}}
    \end{minipage}
    \caption{Relative optimality gap $\delta_g$ over time for fair K-means clustering with $\tau$-fair constraints and $K=5$ clusters for three data sets. Golden hexagons indicate iterations in which the algorithm increases the separation parameter $t$.}
    \label{fig:tau_fair_selected}
\end{figure}
\clearpage

\end{document}